\tikzstyle{block}=[draw opacity=0.7,line width=1.4cm]
\tikzset{
  big black arrow/.style={
    decoration={markings,mark=at position 1 with {\arrow[scale=2.5,black]{>}}},
    postaction={decorate},
    shorten >=0.4pt},
    line/.style={draw, ->}}
\newtheorem{remark}[theorem]{Remark}
\newtheorem{example}[theorem]{Example}
\numberwithin{equation}{section}
\begin{document}



\bibliographystyle{plain}
\title{
On Generalized Jacobi,  Gauss-Seidel and SOR Methods
}

\author{
Manideepa Saha\thanks{Department of Mathematics, National Institute of Technology Meghalaya, Shillong 793003, India
(manideepa.saha@nitm.ac.in).}
\and Jahnavi Chakrabarty\thanks{Department of Mathematics, National Institute of Technology Meghalaya, Shillong 793003, India.} 
}

\pagestyle{myheadings}
\markboth{M.\ Saha
and J.\ Chakrabarty
 }{On Generalized Jacobi,  Gauss-Seidel and SOR Methods }
\maketitle

\begin{abstract} In this paper generalization of Jacobi and Gauss-Seidel methods, introduced by Salkuyeh in 2007, is studied. In particular, convergence criteria for these methods are discussed. A generalization of successive overrelaxation~(SOR) method is proposed, and  its convergence properties for various classes of matrices are discussed. Advantages of generalized SOR method are established through  numerical experiments.


\end{abstract}

\begin{keywords} Iterative method, Jacobi, Gauss-Seidel, SOR, Convergence.
\end{keywords}
\begin{AMS}15A06,~65F15,~65F20,~65F50

\end{AMS}


\section{Introduction} \label{intro3} 
The idea of solving large square systems of linear equations by iterative methods is certainly not new, dating back at least to Gauss [1823]. Jacobi, Gauss-Seidel and SOR methods are most stationary iterative methods that date to the late eighteenth century, but they find current application in problems where the matrix is sparse. 

Consider the linear system of equations
\begin{equation}\label{main_eqn1}
Ax=b
\end{equation}
where $A\in\mathbb{R}^{n,n}$,  and $b\in\mathbb{R}^{n}$. If $A$ is a nonsingular matrix with nonzero diagonal entries, then Jacobi, Gauss-Seidel, and SOR (Successive Over-relaxation) methods for solving $(\ref{main_eqn1})$ are given respectively as,
\begin{eqnarray}
 x^{(n+1)} &=& D^{-1}(E+F)x^{(n)}+D^{-1}b \nonumber\\
x^{(n+1)} &=&(D-E)^{-1}Fx^{(n)}+(D-E)^{-1}b \nonumber \\
x^{(n+1)} &=& (D-\omega E)^{-1}[\left(1-\omega)D+ \omega F\right]x^{(n)}+(D-\omega E)^{-1}b 
\end{eqnarray}
 where $D,-E,-F$ are the diagonal, strictly lower and upper triangular part of $A$, respectively.

Recently, in \cite{Sal07}, authors considered generalization of Jacobi and Gauss-Seidel methods by generalizing the diagonal matrix into a band matrix, and termed them as generalized Jacobi (GJ) and generalized Gauss-Seidel (GGS) methods, respectively. They proved that the convergence of GJ and GGS methods converge for strictly diagonally dominant(SDD) and for $M$-matrices.

It has been known that Jacobi and Gauss-Seidel method also converges for symmetric positive definite matrices(SPD), $L$-matrices and for $H$-matrices~\cite{Var00, BerP94, Saa03, You14}. In this paper we study the convergence of GJ ang GGS methods for the mentioned classes of matrices. We introduce a generalization of SOR method similar to that of GJ, or GGS method, and discuss the convergence of the proposed methods for above mentioned classes. 

The paper is organized as follows: In section~2, we study the convergence of GJ and GGS method for SPD, $L$-matrices and for $H$-matrices. In section ~3, generalized SOR methods is proposed and study the convergence of the method for SDD, SPD, $M$-, $L$-, and for $H$-matrices.  Lastly, in section~4, numerical examples are considered to illustrate the convergence of the proposed methods.
\section{Generalized Jacobi and Gauss-Seidel Method} In this section we describe  GJ and GGS iterative procedures, introduced in~\cite{Sal07}, and check the convergency of these methods for SPD-matrices, $L$-matrices and for $H$-matrices.

Let $A=(a_{ij})$ be an $n\times n$ matrix and $T_m=(t_{ij})$ be a banded matrix of bandwidth $2m+1$ defined as
\begin{center}
$t_{ij} = \left\{
        \begin{array}{ll}
            a_{ij}, & |i-j| \leq m \\
              0 ,& \text{ otherwise }
        \end{array}
    \right.$
    \end{center}
Consider the decomposition $A=T_m-E_m-F_m$, where $E_m$ and $F_m$ are the strict lower part and upper part of the matrix . In other words matrices are defined as following\\
\begin{eqnarray}\label{eqn2.2}
T_{m}=\left[\begin{array}{rrrr}
 a_{1,1} & \dots & a_{1,m+1} & 0 \\
\vdots  & \ddots & \ddots & \\
a_{m+1,1 } &  & & a_{n-m,n}\\
 & \ddots&\ddots & \\ 
0 & & a_{n,n-m} & a_{n,n}
\end{array}\right],  ~& E_{m}=\left[\begin{array}{rrr}
  0 & \dots & 0 \\
 -a_{m+2,1}   \\
\vdots &\ddots&\vdots  \\
-a_{n,1} & \dots & -a_{n-m-1,n}
\end{array}\right]\nonumber \\
\end{eqnarray}
\begin{center}
$F_{m}=\left[\begin{array}{rrrl}
0 & -a_{1,m+2}  &\ldots -a_{1,n} \\
\vdots &\ddots   &\vdots \\
0 & \ldots & -a_{n-m-1,n}\\ & &
\end{array}\right]$.\end{center}
%

Then GJ and GGS methods defined in\cite{Sal07}~for the system (\ref{main_eqn1}) are defined respectively as,
\begin{eqnarray}
 x^{(n+1)} &=& T_m^{-1}(E_m+F_m)x^{(n)}+T_m^{-1}b \label{eqn_GJ}\\
x^{(n+1)} &=&(T_m-E_m)^{-1}F_mx^{(n)}+(T_m-E_m)^{-1}b \label{eqn_GGS}
\end{eqnarray}
that is, corresponding to the splittings $A=M_m-N_m$, with  $M_m=T_m,~N=E_m+F_m$, and $M=T_m-E_m,~N=F_m$, respectively. Equivalently,  $H_{GJ}=T_m^{-1}(E_m+F_m)$, $H_{GGS}=(T_m-E_m)^{-1}F_m$ are the respective iterative matrices for GJ and GGS methods. Note that if $m=0$, then (\ref{eqn_GJ}) and (\ref{eqn_GGS}) will reduce to Jacobi and Gauss-Seidel methods, respectively.

We now define the classes of matrices, considered to study the convergence of the mentioned methods.

\begin{definition}\rm \cite{HorJ94, BerP94, Var00}
An $n \times n$ matrix $A=(a_{ij})$ is said to be strictly diagonally dominant (SDD) if 
\begin{equation}
|a_{ii}|> \sum\limits_{{j=1},j\neq 1}^n|a_{ij}|, i=1,2,3,.....,n
\end{equation}
\end{definition}
\begin{definition}\rm \cite{HorJ94, BerP94, Var00}
An $n \times n$ matrix $A=(a_{ij})$ is said to be symmetric positive definite (SPD) if $A$ is symmetric and $x^TAx>0$ for all $x\neq 0$.\\
\end{definition}

\begin{definition}\rm \cite{HorJ94, BerP94}
A matrix $A\in\mathbb{R}^{n,n}$ , is said to be an $M$-matrix if $A$ can be written as $A=sI-B$, where $B\geq 0$ (i.e., $B$ is entrywise nonnegative), and $s\geq\rho(B)$.
\end{definition}
\begin{definition}\rm \cite{HorJ94, BerP94}
A matrix $A\in \mathbb{C}^{n,n}$ is said to be an $H$-matrix if its comparison matrix $H(A)=(m_{ij})$ with  $m_{ii}=|a_{ii}|$ and $m_{ij}=-|a_{ij}|$, is an $M$-matrix.
\end{definition}
\begin{definition}\rm \cite{HorJ94, BerP94}
A matrix $A\in \mathbb{C}^{n,n}$ is said to be  an $L$-matrix if  for each $i,~a_{ii}>0$  and $a_{ij}\leq 0$,  for $i\neq j$.
\end{definition}

Jacobi and Gauss-Seidel method converge for SDD and $M$-matrices~\cite{HorJ94, BerP94, Saa03}. In~\cite{Sal07}, author proved that GJ and GGS methods converge for both these classes of matrices, as stated below:
\begin{theorem}\rm\cite{Sal07}
Let $A$ be an SDD-matrix. Then for any natural number $m\leq n$, GJ and GGS methods converge for any initial guess $x_0$. 
\end{theorem}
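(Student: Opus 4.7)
My plan is to prove both convergence claims by a contradiction argument: assume $\lambda$ is an eigenvalue of the iteration matrix with $|\lambda|\ge 1$, and show that the associated singular pencil matrix is in fact strictly diagonally dominant, contradicting its singularity.

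For GJ, $\lambda$ is an eigenvalue of $H_{GJ}=T_m^{-1}(E_m+F_m)$ iff the matrix $B(\lambda):=\lambda T_m-(E_m+F_m)$ is singular. I would read off the $(i,j)$-entries of $B(\lambda)$ directly from the decomposition $A=T_m-E_m-F_m$: they equal $\lambda a_{ij}$ when $|i-j|\le m$ and $a_{ij}$ when $|i-j|>m$. Under the hypothesis $|\lambda|\ge 1$, each off-diagonal row sum satisfies
\begin{equation*}
\sum_{\substack{j\neq i\\ |i-j|\le m}}|\lambda\,a_{ij}|+\sum_{|i-j|>m}|a_{ij}|\;\le\;|\lambda|\sum_{j\neq i}|a_{ij}|\;<\;|\lambda|\,|a_{ii}|,
\end{equation*}
the strict inequality being the SDD hypothesis on $A$. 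Hence $B(\lambda)$ is SDD, hence invertible, which contradicts the assumption that $\lambda$ is an eigenvalue. Therefore $\rho(H_{GJ})<1$ and GJ converges for any initial guess.

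For GGS, the same scheme applies to $C(\lambda):=\lambda(T_m-E_m)-F_m$. Using the definitions of $T_m$, $E_m$, $F_m$, the entries of $C(\lambda)$ are $\lambda a_{ij}$ for $j\le i+m$ (i.e., on and below the upper band) and $a_{ij}$ for $j>i+m$. Again the off-diagonal absolute row sum is bounded by $|\lambda|\sum_{j\neq i}|a_{ij}|<|\lambda|\,|a_{ii}|$, so $C(\lambda)$ is SDD and nonsingular whenever $|\lambda|\ge 1$, yielding $\rho(H_{GGS})<1$.

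The main obstacle, such as it is, is purely bookkeeping: one has to make sure the band/out-of-band entries of $T_m$, $E_m$, $F_m$ are combined correctly so that the SDD inequality for $A$ can absorb both contributions via the single factor $|\lambda|\ge 1$. No eigenvalue localization beyond the Levy--Desplanques theorem is needed, and the standard reduction ``spectral radius of a splitting iteration $<1$ iff no eigenvalue of modulus $\ge 1$'' handles the convergence conclusion.
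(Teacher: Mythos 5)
Your argument is correct: showing that the pencils $\lambda T_m-(E_m+F_m)$ and $\lambda(T_m-E_m)-F_m$ inherit strict diagonal dominance from $A$ when $|\lambda|\ge 1$ (the single factor $|\lambda|$ absorbing both the in-band and out-of-band contributions), then invoking Levy--Desplanques and the standard criterion $\rho<1$, is exactly the SDD-contradiction technique used for this result in \cite{Sal07} and the same device this paper itself applies to the GSOR case in Theorem~\ref{thm_DD_GSOR}. The only implicit point worth a word is that $T_m$ and $T_m-E_m$ are themselves SDD (same diagonal as $A$, a subset of its off-diagonal entries), hence nonsingular, so $H_{GJ}$ and $H_{GGS}$ are well defined and the passage from an eigenvalue to a singular pencil is legitimate.
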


\begin{theorem}\rm\label{thm_GJ_GGS_M}\cite{Sal07}
If $A$ is an $M$-matrix, then for a given natural number $m\leq n$, both GJ and GGS methods are convergent for any initial guess $x_0$.
\end{theorem}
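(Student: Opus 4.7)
The plan is to prove convergence by recognizing that the two splittings $A = T_m - (E_m+F_m)$ and $A = (T_m - E_m) - F_m$ are \emph{regular splittings}, and then invoking the classical theorem (see e.g.\ Varga or Berman--Plemmons) which states that if $A$ is a nonsingular matrix with $A^{-1}\ge 0$ and $A = M - N$ is a regular splitting, then $\rho(M^{-1}N) < 1$. Since an $M$-matrix $A$ automatically satisfies $A^{-1}\ge 0$, it suffices to check (i) $N\ge 0$ and (ii) $M^{-1}\ge 0$ for each of the two splittings.

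Condition (i) is immediate from the definition of $E_m$ and $F_m$: their nonzero entries are exactly $-a_{ij}$ for off-diagonal positions of $A$, and off-diagonal entries of an $M$-matrix are nonpositive, hence $E_m\ge 0$, $F_m\ge 0$, and so $E_m+F_m\ge 0$.

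The core of the argument is condition (ii). Here I would use the characterization that a $Z$-matrix $B$ (positive diagonal, nonpositive off-diagonal) is a nonsingular $M$-matrix if and only if there exists $x>0$ with $Bx>0$; in that case $B^{-1}\ge 0$. Since $A$ is an $M$-matrix, there is a vector $x>0$ with $Ax>0$. Observe that both $T_m$ and $T_m - E_m$ inherit positive diagonals and nonpositive off-diagonal entries from $A$, so they are $Z$-matrices. Moreover,
\begin{equation*}
T_m x = Ax + (E_m+F_m)x \ge Ax > 0,
\qquad
(T_m - E_m)x = Ax + F_m x \ge Ax > 0,
\end{equation*}
because $E_m+F_m\ge 0$, $F_m\ge 0$, and $x>0$. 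Therefore $T_m$ and $T_m-E_m$ are themselves nonsingular $M$-matrices, giving $T_m^{-1}\ge 0$ and $(T_m-E_m)^{-1}\ge 0$.

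With both splittings now verified to be regular, the regular-splitting convergence theorem applied to the $M$-matrix $A$ yields $\rho(H_{GJ}) = \rho\bigl(T_m^{-1}(E_m+F_m)\bigr) < 1$ and $\rho(H_{GGS}) = \rho\bigl((T_m-E_m)^{-1}F_m\bigr) < 1$, which is precisely convergence for any initial guess $x_0$. The only nontrivial step I anticipate is the verification that $T_m$ and $T_m - E_m$ have nonnegative inverses; everything else is bookkeeping once the correct characterization of nonsingular $M$-matrices (via the existence of a positive vector $x$ with $Ax>0$) is brought in.
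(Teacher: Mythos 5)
Your proposal is correct and takes essentially the same route as the cited source and as the paper's own argument for the analogous GSOR result (Theorem~\ref{Thm_GSOR_M}): show $E_m+F_m\ge 0$ and use the semipositivity characterization (Theorem~\ref{charM}) with the same vector $x>0$, $Ax>0$, to conclude that the $Z$-matrices $T_m$ and $T_m-E_m$ are nonsingular $M$-matrices with nonnegative inverses, so both splittings are regular and the regular-splitting theorem (Theorem~\ref{thm_reg}) together with $A^{-1}\ge 0$ gives $\rho(H_{GJ})<1$ and $\rho(H_{GGS})<1$. No gaps worth noting, beyond making explicit that the $M$-matrix is assumed nonsingular so that the characterization and $A^{-1}\ge 0$ apply.
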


As it is known that both Jacobi and Gauss-Seidel methods also converge for SPD-matrices, $L$-matrices and for $H$-matrices~~\cite{BerP94, Saa03, You14, Var00}, we now discuss the convergence of GJ and GGS for these classes.

\begin{theorem}\label{thm1.2}\rm \cite{You14} If $A=M-N$ is a splitting of $A$, then the corresponding  iterative method $x^{(n+1)}=M^{-1}Nx^{(n)}+M^{-1}b$  for solving~(\ref{main_eqn1}) converges for any initial guess $x_{0}$ if and only if $\rho(M^{-1}N)<1$.
  \end{theorem}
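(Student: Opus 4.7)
The plan is to reduce the statement to the classical spectral radius criterion for powers of a matrix to tend to zero, via the standard error recursion.

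First I would note that since $A=M-N$ is a splitting, both $A$ and $M$ are assumed invertible, so $(\ref{main_eqn1})$ has a unique solution $x^{*}$ and the iteration is well defined. From $A x^{*}=b$ we get $(M-N)x^{*}=b$, hence $x^{*}=M^{-1}N x^{*}+M^{-1}b$. Subtracting this fixed-point identity from the iteration $x^{(n+1)}=M^{-1}N x^{(n)}+M^{-1}b$ and writing $e^{(n)}:=x^{(n)}-x^{*}$ yields the error recursion
\begin{equation*}
e^{(n+1)}=M^{-1}N\, e^{(n)}, \qquad\text{so that}\qquad e^{(n)}=(M^{-1}N)^{n}\, e^{(0)}.
\end{equation*}

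Next I would observe that the iteration converges for every initial guess $x_{0}$ if and only if $e^{(n)}\to 0$ for every initial error vector $e^{(0)}\in\mathbb{R}^{n}$. Choosing $e^{(0)}$ to be each standard basis vector in turn shows that this is equivalent to $(M^{-1}N)^{n}\to 0$ as $n\to\infty$ (convergence in any matrix norm, since all norms on $\mathbb{R}^{n,n}$ are equivalent). Thus the theorem reduces to the purely linear-algebraic claim: for a square matrix $B$, $B^{n}\to 0$ if and only if $\rho(B)<1$.

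Finally I would dispatch this claim in the usual two directions. For the ``only if'' part, if $Bv=\lambda v$ with $v\neq 0$ then $B^{n}v=\lambda^{n}v$, and $B^{n}\to 0$ forces $|\lambda|^{n}\|v\|\to 0$, hence $|\lambda|<1$; since this holds for every eigenvalue, $\rho(B)<1$. For the ``if'' part I would invoke the Jordan canonical form $B=PJP^{-1}$: the entries of $J^{n}$ are sums of terms of the form $\binom{n}{k}\lambda^{n-k}$ for eigenvalues $\lambda$ of $B$, and $\rho(B)<1$ makes every such term tend to $0$, whence $B^{n}=P J^{n} P^{-1}\to 0$.

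There is no serious obstacle here; the only non-trivial ingredient is the standard characterization of convergent matrices via the spectral radius, which is a textbook fact and can simply be cited. The proof is essentially bookkeeping: set up the error recursion, translate convergence into $(M^{-1}N)^{n}\to 0$, and quote the spectral radius lemma.
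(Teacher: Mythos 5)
Your argument is correct: the error recursion $e^{(n)}=(M^{-1}N)^{n}e^{(0)}$, the reduction to the claim that $B^{n}\to 0$ if and only if $\rho(B)<1$, and the eigenvector/Jordan-form dispatch of that claim constitute the standard textbook proof of this result. Note that the paper itself offers no proof here --- the theorem is simply quoted from Young's book \cite{You14} as a known fact --- so there is nothing in the paper to diverge from; your write-up is essentially the proof one finds in the cited reference. The only point worth making explicit is that ``converges for any initial guess'' is understood as convergence to the solution $x^{*}$ of~(\ref{main_eqn1}) (or, equivalently, one should observe that since $A$ is nonsingular, $I-M^{-1}N=M^{-1}A$ is nonsingular, so any limit of the iterates is forced to be $x^{*}$), after which your equivalence with $(M^{-1}N)^{n}\to 0$ is airtight.
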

  
Theorem~\ref{thm1.2}  is used to show that convergence of GJ and GGS methods.

Following theorem gives charaterization of $M$-matrices, which is used to prove the convergence of GJ and GGS method for $H$-matrices.
\begin{theorem}\rm\cite{BerP94}\label{charM} Let $A$ be a matrix with off-diagonal entries are non-positive and $A$ is nonsingular. Then following statements are equivalent:
\begin{itemize}
\item[(i)] $A$ is an $M$-matrix 
\item[(ii)] $A$ is semipositive, that is, there exists $x>0$ such that $Ax>0$.
\item[(iii)] $A$ is monotone, that is, $Ax\geq 0$ implies $x\geq 0$.
\item[(iv)] $A^{-1}\geq 0$.
\end{itemize}
\end{theorem}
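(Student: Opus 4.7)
The plan is to prove the equivalence cyclically, establishing (i) $\Rightarrow$ (iv) $\Rightarrow$ (iii) $\Rightarrow$ (ii) $\Rightarrow$ (i). The off-diagonal sign pattern is used explicitly only in the last implication, where it couples the spatial structure of $A$ to spectral information about its nonnegative companion.

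For (i) $\Rightarrow$ (iv), I would write $A = sI - B$ with $B \geq 0$ and $s \geq \rho(B)$. The first sub-step is to promote this to the strict inequality $s > \rho(B)$: since $B \geq 0$, the Perron--Frobenius theorem guarantees $\rho(B)$ is an eigenvalue of $B$, so $s = \rho(B)$ would make $A$ singular, contradicting the hypothesis. Once $s > \rho(B)$, the Neumann series $A^{-1} = s^{-1}\sum_{k \geq 0}(B/s)^k$ converges entrywise to a nonnegative matrix. The implication (iv) $\Rightarrow$ (iii) is then immediate: if $Ax \geq 0$, writing $x = A^{-1}(Ax)$ expresses $x$ as a product of a nonnegative matrix with a nonnegative vector.

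For (iii) $\Rightarrow$ (ii), I would fix any $y > 0$ and define $x := A^{-1}y$; monotonicity gives $x \geq 0$, and the nonpositive off-diagonal hypothesis is used to upgrade this to $x > 0$. Concretely, if $x_i = 0$ for some index $i$, then $(Ax)_i = \sum_{j \neq i} a_{ij} x_j \leq 0$, which contradicts $(Ax)_i = y_i > 0$. Hence $x > 0$ and $Ax = y > 0$, proving semipositivity.

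The main obstacle is (ii) $\Rightarrow$ (i). Given $x > 0$ with $Ax > 0$, I would first deduce that each diagonal entry satisfies $a_{ii} > 0$: the inequality $a_{ii}x_i > -\sum_{j\neq i} a_{ij}x_j \geq 0$ combined with $x_i > 0$ does the job. Then I would choose $s = \max_i a_{ii}$ and set $B = sI - A$, which is entrywise nonnegative by the sign hypothesis and the choice of $s$. The key spectral step is to conclude $\rho(B) \leq s$ from the componentwise inequality $Bx = sx - Ax \leq sx$ for the strictly positive vector $x$; this is exactly the Collatz--Wielandt inequality for nonnegative matrices, which I would cite from Berman--Plemmons rather than reprove. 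Together with nonsingularity of $A$, this yields $A = sI - B$ with $B \geq 0$ and $s \geq \rho(B)$, i.e.\ $A$ is an $M$-matrix. The delicate point here is really just ensuring that the Perron--Frobenius machinery is invoked cleanly; beyond that, the argument is bookkeeping with nonnegativity.
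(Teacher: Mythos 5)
Your cyclic argument (i) $\Rightarrow$ (iv) $\Rightarrow$ (iii) $\Rightarrow$ (ii) $\Rightarrow$ (i) is correct: nonsingularity forces $s>\rho(B)$ so the Neumann series gives $A^{-1}\geq 0$; monotonicity follows at once; the Z-sign pattern upgrades $A^{-1}y\geq 0$ to a strictly positive $x$ with $Ax>0$; and the Collatz--Wielandt bound applied to $B=sI-A$ with the strictly positive $x$ closes the loop. Note, however, that the paper itself offers no proof of this statement---it is imported verbatim from Berman and Plemmons as Theorem~\ref{charM} and used as a tool elsewhere---so there is no in-paper argument to compare with; yours is the standard textbook proof of this characterization. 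One small slip in your overview: the nonpositive off-diagonal hypothesis is used not only in (ii) $\Rightarrow$ (i) but also in (iii) $\Rightarrow$ (ii) (to rule out a zero component of $x$), exactly as your own detailed argument shows, so the summary sentence should be adjusted but the mathematics stands.
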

\begin{theorem}\rm\cite{HorJ90}\label{thm_Horn_n} If $A\neq 0$, $x\geq 0$, and $x\neq 0$. If $Ax\geq\alpha x$, for some real $\alpha$, then $\rho(A)\geq\alpha.$

\end{theorem}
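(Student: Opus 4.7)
The plan is to treat this as a standard Perron--Frobenius style result for nonnegative matrices. Although the statement as displayed only records $A\neq 0$, the inequality $Ax\ge \alpha x$ together with $x\ge 0$ is only a useful spectral-radius lower bound when $A$ is entrywise nonnegative, which is the intended Horn--Johnson setting and the way the theorem will be applied in the sequel. Under $A\ge 0$ and $A\neq 0$, the case $\alpha\le 0$ is immediate since $\rho(A)\ge 0\ge \alpha$. So I would reduce to the substantive case $\alpha>0$.

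For $\alpha>0$ I would argue by contradiction: assume $\rho(A)<\alpha$. Then $\alpha I - A$ has the form $sI-B$ with $B=A\ge 0$ and $s=\alpha>\rho(B)$, and its off-diagonal entries are $\le 0$, so $\alpha I - A$ fits the hypothesis of Theorem~\ref{charM}; by (i)$\Rightarrow$(iv) of that theorem one has $(\alpha I-A)^{-1}\ge 0$. The assumption $Ax\ge \alpha x$ rewrites as $(\alpha I - A)x\le 0$, and multiplying on the left by the nonnegative matrix $(\alpha I -A)^{-1}$ preserves the inequality, giving $x\le 0$. Combined with $x\ge 0$ this forces $x=0$, contradicting $x\neq 0$. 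Hence $\rho(A)\ge \alpha$.

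The only place where something could go wrong is the inverse-positivity step, and that is handled cleanly by Theorem~\ref{charM} already invoked in the paper, so no genuine obstacle arises. As a self-contained backup that avoids the $M$-matrix machinery, one can iterate: since $A\ge 0$, left multiplication preserves componentwise inequalities, so $Ax\ge \alpha x$ yields $A^kx\ge \alpha^k x$ for every $k\in\mathbb{N}$. Picking any monotone vector norm, this gives $\|A^k\|\,\|x\|\ge \|A^kx\|\ge \alpha^k\|x\|$, and Gelfand's formula $\rho(A)=\lim_{k\to\infty}\|A^k\|^{1/k}$ then delivers $\rho(A)\ge \alpha$. Either route is short; the $M$-matrix route is preferable here because it stays inside the vocabulary the paper is already using.
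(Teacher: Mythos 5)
Your proposal is correct, but note that the paper offers no proof of this statement at all: it is quoted verbatim from Horn and Johnson \cite{HorJ90} and then used as a black box in Theorem~\ref{thm_GJ_H}, so there is no internal argument to compare against. Two remarks on your version. First, you are right that the printed hypothesis ``$A\neq 0$'' must be read as $A\geq 0$; without nonnegativity the claim is false, e.g. $A=\left[\begin{smallmatrix}1 & -1\\ 1 & -1\end{smallmatrix}\right]$, $x=(1,0)^{T}$, $\alpha=1$ gives $Ax\geq \alpha x$ while $\rho(A)=0$, and nonnegativity is exactly the setting in which the paper applies the result (to $M_1^{-1}N_1\geq 0$). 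Second, both of your routes are sound: in the contradiction argument, $\alpha>\rho(A)$ makes $\alpha I-A$ a nonsingular $Z$-matrix of the form $sI-B$ with $s>\rho(B)$, so Theorem~\ref{charM} gives $(\alpha I-A)^{-1}\geq 0$ (or, equivalently, monotonicity applied to $(\alpha I-A)(-x)\geq 0$), forcing $x\leq 0$ and hence $x=0$, a contradiction; and the backup via $A^{k}x\geq \alpha^{k}x$, a monotone norm, and Gelfand's formula is the classical Perron--Frobenius-style argument, closer in spirit to the source being cited and independent of the $M$-matrix machinery. Either proof would serve; the $M$-matrix one integrates neatly with the tools the paper already invokes, while the iteration argument is self-contained and also covers the trivial case $\alpha\leq 0$ uniformly once you note $\rho(A)\geq 0$.
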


\begin{theorem}\label{thm_GJ_H}\rm If $A$ is an $H$-matrix, then GJ converges for any initial guess $x_{0}.$
\end{theorem}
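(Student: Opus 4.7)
The plan is to reduce the convergence of GJ applied to an $H$-matrix $A$ to the convergence of GJ applied to its comparison matrix $H(A)$, which is an $M$-matrix and hence already covered by Theorem~\ref{thm_GJ_GGS_M}. Throughout, write $A = T_m - E_m - F_m$ and $H(A) = \widetilde{T}_m - \widetilde{E}_m - \widetilde{F}_m$ for the analogous band splitting of the comparison matrix, and note that by construction $\widetilde{T}_m$ has diagonal $|a_{ii}|$ and off-diagonal entries $-|a_{ij}|$ within the band (so $\widetilde{T}_m = H(T_m)$), while $\widetilde{E}_m, \widetilde{F}_m \geq 0$.

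First I would show that $\widetilde{T}_m$ is itself an $M$-matrix. Since $H(A)$ is an $M$-matrix, Theorem~\ref{charM}(ii) gives a vector $x>0$ with $H(A)x>0$. Because $\widetilde{E}_m+\widetilde{F}_m\geq 0$ and $x>0$, we get
\begin{equation*}
\widetilde{T}_m x \;=\; H(A)x + (\widetilde{E}_m+\widetilde{F}_m)x \;\geq\; H(A)x \;>\; 0,
\end{equation*}
so $\widetilde{T}_m$ is semipositive; combined with its non-positive off-diagonal pattern, Theorem~\ref{charM}(ii) implies $\widetilde{T}_m$ is an $M$-matrix. In particular $T_m$ is nonsingular (so GJ is well-defined) and, since $H(T_m)=\widetilde{T}_m$ is a nonsingular $M$-matrix, $T_m$ is an $H$-matrix.

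Next, I would invoke the standard inequality for $H$-matrices, namely $|T_m^{-1}|\leq H(T_m)^{-1}=\widetilde{T}_m^{-1}$. Together with $|E_m|+|F_m|=\widetilde{E}_m+\widetilde{F}_m$ entrywise, this yields
\begin{equation*}
|H_{GJ}|\;=\;|T_m^{-1}(E_m+F_m)|\;\leq\;|T_m^{-1}|(|E_m|+|F_m|)\;\leq\;\widetilde{T}_m^{-1}(\widetilde{E}_m+\widetilde{F}_m)\;=\;\widetilde{H}_{GJ},
\end{equation*}
where $\widetilde{H}_{GJ}$ is the GJ iteration matrix of $H(A)$. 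By monotonicity of the spectral radius under entrywise domination of nonnegative matrices, $\rho(H_{GJ})\leq\rho(|H_{GJ}|)\leq\rho(\widetilde{H}_{GJ})$.

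Finally, since $H(A)$ is an $M$-matrix, Theorem~\ref{thm_GJ_GGS_M} gives $\rho(\widetilde{H}_{GJ})<1$, hence $\rho(H_{GJ})<1$ and GJ converges for any initial guess by Theorem~\ref{thm1.2}. I expect the main obstacle to be the step $|T_m^{-1}|\leq H(T_m)^{-1}$: this is a classical property of $H$-matrices but is not stated earlier in the excerpt, so in the write-up it will either need to be cited from \cite{BerP94} or justified briefly (e.g., via a Neumann-series / semipositivity argument analogous to the one used above for $\widetilde{T}_m$).
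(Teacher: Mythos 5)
Your proof is correct, but it takes a genuinely different route from the paper's. The paper also reduces matters to the comparison matrix $H(A)$, but it works at the level of an eigenpair: writing $M=T_m=D+R_m$, $N=E_m+F_m$, it takes $Nx=\lambda Mx$, estimates $|Mx|\geq |D|\,|x|-|R_m|\,|x|$ to get $|\lambda|\,M_1|x|\leq N_1|x|$ with $M_1=|D|-|R_m|=H(T_m)$ and $N_1=|E_m|+|F_m|$, and then concludes $|\lambda|\leq\rho(M_1^{-1}N_1)<1$ from $M_1^{-1}\geq 0$ together with Theorem~\ref{thm_Horn_n}; this keeps the argument entirely within results already stated in the paper (Theorems~\ref{charM}, \ref{thm_Horn_n} and \ref{thm_GJ_GGS_M}). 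You instead dominate the iteration matrix itself, $|H_{GJ}|\leq H(T_m)^{-1}\bigl(|E_m|+|F_m|\bigr)$, via Ostrowski's inequality $|T_m^{-1}|\leq H(T_m)^{-1}$, and finish with Perron--Frobenius monotonicity of the spectral radius; your preliminary step showing $H(T_m)$ is a nonsingular $M$-matrix (semipositivity inherited from $H(A)$) plays the same role as the paper's observation that $H(A)\leq M_1$ forces $M_1$ to be an invertible $M$-matrix. What your version buys is brevity and a transparent statement that the GJ iteration of $A$ is entrywise dominated by the GJ iteration of $H(A)$; what it costs is the one ingredient the paper does not state, namely $|T_m^{-1}|\leq H(T_m)^{-1}$, which, as you say, must be cited or proved --- and your Neumann-series plan does work: since $H(T_m)=|D|-|R_m|$ is a nonsingular $M$-matrix, $\rho\bigl(|D|^{-1}|R_m|\bigr)<1$, so $T_m^{-1}=\sum_{k\geq 0}(-D^{-1}R_m)^k D^{-1}$ and taking absolute values entrywise gives $|T_m^{-1}|\leq\bigl(|D|-|R_m|\bigr)^{-1}$. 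So there is no gap, only that auxiliary lemma to supply, whereas the paper's eigenvector argument avoids it altogether.
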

\begin{proof}
Let $A$ be an $H$-matrix. Consider the decomposition of $A$ as $A=T_{m}-(E_{m}+F_{m})$,  for some $m$. Let $M=T_{m}=D+R_{m}$, and $N=E_{m}+F_{m}$ , where $D=\diag(A)$, and $R_{m}=T_{m}-D$. Let $H(A)$ be the comparison matrix of $A$, so that $H(A)$ is an $M$-matrix. Note that $H(A)=|D|-|R_{m}|-|E_m|-|F_m|$. Then $H(A)=M_{1}-N_{1}$ is the generalized Jacobi splitting of $H(A)$, where $M_{1}=|D|-|R_{m}|$  and $N_{1}=|E_m|+|F_m|$. Hence by Theorem~\ref{thm_GJ_GGS_M}, $\rho(M_{1}^{-1}N_{1})<1$.

Let $\lambda$ be any eigenvalue of $M^{-1}N$, and let $x\neq 0$ such that $M^{-1}Nx=\lambda x$, that is, $Nx=\lambda Mx$. Then $|\lambda|.|Mx|\leq |N|.|x|$ implies that 
\begin{equation}\label{eq1}
|\lambda|.|Dx+R_{m}x|\leq |N|.|x|
\end{equation}
Again,\\
 $|Dx+R_{m}x|=|Dx-(-R_{m}x)|\geq|\left(|Dx|-|R_{m}x|\right)|\geq |Dx|-|R_{m}x|=|D|.|x|-|R_{m}x|.$ Now equation~(\ref{eq1}) implies that
\begin{eqnarray}\label{eqn2}
&& |\lambda|\left(|D|.|x|-|R_{m}x|\right)\leq |N|.|x|\nonumber\\
&\Rightarrow& |\lambda|\left(|D|.|x|-|R_{m}|.|x|\right)\leq |N|.|x|\leq N_{1}|x|\ \text {    ~~~~~~~ as $|R_{m}x|\leq |R_{m}|.|x|$}\nonumber\\
&\Rightarrow& |\lambda| M_{1}|x|\leq N_{1}|x|
\end{eqnarray}
Since $M_1$ is an $Z$-matrix and $H(A)\leq M_{1}$, so $M_1$ is an invertible $M$-matrix and hence by Theorem~\ref{charM} $M^{-1}_{1}\geq 0$. Equation (\ref{eqn2}) implies that $|\lambda|.|x|\leq M_{1}^{-1}N_{1}|x|$. As $ M_{1}^{-1}N_{1}\geq 0$,  $|x|\geq 0$, and $x\neq 0$, so by Theorem~\ref{thm_Horn_n}$,~|\lambda|\leq\rho( M_{1}^{-1}N_{1})<1$. This shows that $\rho( M^{-1}N)<1$, and  GJ method converges.
%
\end{proof}
\begin{theorem}\label{thm_GGS_H}\rm If $A$ is an $H$-matrix, GGS method converges for any initial guess $x_{0}.$
\end{theorem}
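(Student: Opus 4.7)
The plan is to mirror the strategy of Theorem~\ref{thm_GJ_H}, replacing the GJ splitting by the GGS splitting throughout. Write $T_m = D + R_m$ with $D = \mathrm{diag}(A)$, so that the GGS splitting of $A$ is $A = M - N$ with $M = T_m - E_m = D + R_m - E_m$ and $N = F_m$. Since $A$ is an $H$-matrix, its comparison matrix $H(A) = |D| - |R_m| - |E_m| - |F_m|$ is an $M$-matrix, and the \emph{GGS splitting of} $H(A)$ reads $H(A) = M_1 - N_1$ with
\[
M_1 = |D| - |R_m| - |E_m|, \qquad N_1 = |F_m|.
\]
By Theorem~\ref{thm_GJ_GGS_M} applied to $H(A)$, we obtain $\rho(M_1^{-1} N_1) < 1$, and since $M_1$ is a $Z$-matrix with $M_1 \geq H(A)$, Theorem~\ref{charM} gives $M_1^{-1} \geq 0$ (the same justification used in the proof of Theorem~\ref{thm_GJ_H}).

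Next, let $\lambda$ be an arbitrary eigenvalue of $H_{GGS} = M^{-1} N$ and choose $x \neq 0$ with $N x = \lambda M x$, i.e.\ $\lambda(D + R_m - E_m)x = F_m x$. Isolating the diagonal term gives
\[
\lambda D x \;=\; -\lambda R_m x + \lambda E_m x + F_m x.
\]
Taking absolute values entrywise, using $|Dx| = |D|\,|x|$ and $|Rx| \leq |R|\,|x|$ for each of $R_m, E_m, F_m$, yields
\[
|\lambda|\bigl(|D| - |R_m| - |E_m|\bigr)|x| \;\leq\; |F_m|\,|x|,
\qquad\text{i.e.,}\qquad |\lambda|\, M_1 |x| \leq N_1 |x|.
\]
Multiplying both sides by $M_1^{-1} \geq 0$ preserves the inequality, so $|\lambda|\,|x| \leq M_1^{-1} N_1 |x|$. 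Since $M_1^{-1} N_1 \geq 0$, $|x| \geq 0$, and $|x| \neq 0$, Theorem~\ref{thm_Horn_n} forces $|\lambda| \leq \rho(M_1^{-1} N_1) < 1$. Hence $\rho(H_{GGS}) < 1$ and GGS converges for every initial $x_0$ by Theorem~\ref{thm1.2}.

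The only step that requires real care is the algebraic manipulation in the middle paragraph: one must break $M = T_m - E_m$ into its diagonal part $D$ and its off-diagonal part $R_m - E_m$ so that the absolute-value inequality produces precisely the matrix $M_1 = |D| - |R_m| - |E_m|$ arising from the GGS splitting of $H(A)$. Once this bookkeeping is done correctly, everything else follows by exactly the same Perron–Frobenius style argument as in Theorem~\ref{thm_GJ_H}.
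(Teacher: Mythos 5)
Your proposal is correct and follows essentially the same route as the paper's proof: pass to the comparison matrix $H(A)$, use convergence of GGS for $M$-matrices to get $\rho(M_1^{-1}N_1)<1$ with $M_1=|D|-|R_m|-|E_m|$, $N_1=|F_m|$, derive $|\lambda|\,M_1|x|\leq N_1|x|$ for any eigenpair, and conclude via $M_1^{-1}\geq 0$ and the Perron--Frobenius bound. The only (harmless) differences are that you isolate the diagonal term before taking absolute values instead of lower-bounding $|Mx|$ directly, and you correctly take $N=F_m$, which the paper's proof states with a typo ($N=E_m+F_m$) but uses as $F_m$ anyway.
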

\begin{proof}
Let $A$ be an $H$-matrix. Consider the generalized Gauss-Seidel splitting of $A=T_{m}-E_{m}-F_{m}$, for some $m$. Let $M=T_{m}-E_{m}=D+R_{m}-E_{m}$, and $N=E_{m}+F_{m}$ , where $D=\diag(A)$, and $R_{m}=T_{m}-D$. Let $H(A)$ be the comparison matrix of $A$, so that $H(A)$ is an $M$-matrix. Note that $H(A)=|D|-|R_{m}|-|E_m|-|F_m|$. Then $H(A)=M_{1}-N_{1}$ is the generalized Gauss-Seidel splitting of $H(A)$, where $M_{1}=|D|-|R_{m}|-|E_{m}|$  and $N_{1}=|F_m|$. As we know that GGS method converges for $M$-matrices, so $\rho(M_{1}^{-1}N_{1})<1$.

Let $\lambda$ be any eigenvalue of $M^{-1}N$, and let $x\neq 0$ such that $M^{-1}Nx=\lambda x$, that is, $Nx=\lambda Mx$. Then $|\lambda|.|Mx|\leq |N|.|x|$ implies that 
\begin{equation}\label{eq1.2}
|\lambda|.|Dx+R_{m}x-E_{m}x|\leq |F_{m}|.|x|=N_{1}.|x|
\end{equation}
Again \[\small |Dx+R_{m}x-E_{m}x|\geq|\left(|Dx|-|R_{m}x-E_{m}x|\right)|\geq |D|.|x|-|R_{m}x-E_{m}x|\geq |D|.|x|-|R_{m}x|-|E_{m}x|\]
that is, $|Dx+R_{m}x-E_{m}x|\geq |D|.|x|-|R_{m}|.|x|-|E_{m}|.|x|=M_{1}|x|$. So equation~(\ref{eq1.2}) implies that
\begin{equation}\label{eqn2}
|\lambda| M_{1}|x|\leq N_{1}|x|
\end{equation}
Since $M_1$ is an $Z$-matrix and $H(A)\leq M_{1}$, so $M_1$ is an invertible $M$-matrix and hence $M^{-1}_{1}\geq 0$. Equation (\ref{eqn2}) implies that $|\lambda|.|x|\leq M_{1}^{-1}N_{1}|x|$. As $ M_{1}^{-1}N_{1}\geq 0$,  $|x|\geq 0$, and $x\neq 0$, so $|\lambda|\leq\rho( M_{1}^{-1}N_{1})<1$. This shows that $\rho( M^{-1}N)<1$, and  GGS method converges.
\end{proof}

Following examples show that neither GJ nor GGS may converge for SPD and for $L$-matrices.

\begin{example}\label{exam2}\rm Consider the symmetric positive definite matrix\[\begin{array}{rr}
A=\left[\begin{array}{rrrr}
410 & -195 & -90\\
-195 & 151 & 112\\
-90 & 112 & 132
\end{array}\right]
\end{array}\]
Take $m=1$ so that $A=T_1-E_1-F_1$, where
\[T_{1}=\left[\begin{array}{crr}
410 & -195 & 0\\
-195 & 151 & 112\\
0 & 112 & 132
\end{array}\right], ~ E_1=\left[\begin{array}{rrr}
0 & 0 & 0\\
0 & 0 & 0\\
-90 & 0 & 0
\end{array}\right], \text{ and }F_1=E_1^T\]

If $M_{1}=T_{1}$, and $N_{1}=E_{1}+F_{1}$, then $\rho(H_{GJ})=\rho(M_{1}^{-1}N_{1})=1.5883>1$. Also if we take  $M_{1}=T_{1}-E_{1}$, and $N_{1}=F_{1}$, then $\rho(H_{GGS})=\rho(M_{1}^{-1}N_{1})=30.1584>1$. This shows that neither GJ nor GGS converge. 

\end{example}
\begin{example}\label{exam3}\rm  Let us consider the $L$-matrix 
\[A=\left[\begin{array}{rrr}
1 & -1 & -5 \\
-2 & 3 & -4 \\
-1 & -5 & 3
\end{array}\right].\]
If $m=1$,  $\rho(H_{GJ})=\rho\left(T_{1}^{-1}(E_1+E_{1}^{T})\right)=2.8689$ and  $\rho(H_{GGS})=\rho\left((T_1-E_1)^{-1}E_{1}^{T}\right)=3.0952$,  so both GJ and GGS do not converge. Note that the matrix $A$ is an $L$-matrix, but not an $M$-matrix.
\end{example}
\section{Generalized SOR method} Successive Over-relaxation (SOR) method is a variant of Gauss-Seidel method, which can be used to accelerate the convergence of Gauss-Seidel method. If $D,~-E$ and $-F$ are respectively, diagonal, strictly lower triangular, and strictly upper triangular parts of a matrix $A$, SOR iterative method \cite{Had00} for solving the linear system (\ref{main_eqn1}) is given by,
 \begin{eqnarray}\label{eqn3.1}
x^{(n+1)} &=& (D-\omega E)^{-1}[\left(1-\omega)D+ \omega F\right]x^{(n)}+(D-\omega E)^{-1}b 
 \end{eqnarray} 
In equation (\ref{eqn3.1}), $\omega$ is called as relaxation factor. Note that SOR  method (\ref{eqn3.1}) is corresponding to the splitting $\omega A=M-N=(D-\omega E)-((1-\omega)D+\omega F)$ of $\omega A$. For $\omega=1$, SOR method (\ref{eqn3.1}) coincide with Gauss-Seidel method, which therefore also called as relaxation.  For $0<\omega<1$, the precise name of (\ref{eqn3.1}) is underrelaxation method, whereas the term overrelaxation suits $\omega>1$. But we use the term SOR for (\ref{eqn3.1}). It is well known that if $0<\omega<2$, then SOR method (\ref{eqn3.1}) converges.
 
We now introduce generalized SOR method, called as GSOR method, similar to that of GJ and GGS .
\begin{definition}\rm
 For $1\leq m<n$, let $T_{m}$, $E_{m}$ and $F_{m}$ be the matrix defined in (\ref{eqn2.2}) such that $A=T_{m}-E_{m}-F_{m}$. We define GSOR method for the linear system $Ax=b$ by,
 \begin{eqnarray}\label{GSOR}
 x^{(n+1)}=& (T_m-\omega E_m)^{-1}\left[(1-\omega)T_m+\omega F_m\right]x^{(n)}+\omega(T_m-\omega E_m)^{-1}b
 \end{eqnarray}
 \end{definition}
Then $H_{\rm GSOR}=(T_m-\omega E_m)^{-1}[(1-\omega)T_m+\omega F_m]$ is the iterative matrix for the GSOR method~(\ref{GSOR}). Note that the method (\ref{GSOR}) is corresponding to splitting $\omega A=(T_m-\omega E_m)-\left((1-\omega)T_m+\omega F_m\right)$ of $\omega A$. We now  discuss convergence analysis of GSOR method for various class of matrices.

Following theorem gives a necessary condition for convergence of SOR method.
\begin{theorem}\label{nec_SOR}\rm \cite{Had00,BerP94} A necessary condition for SOR method to converge is $|\omega-1|<1.$\end{theorem}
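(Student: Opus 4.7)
The plan is to follow the classical Kahan-style determinant argument, which gives the result in a few lines. Writing $H_{\rm SOR}=(D-\omega E)^{-1}\bigl[(1-\omega)D+\omega F\bigr]$, I would first compute $\det(H_{\rm SOR})$ by exploiting the triangular structure of the two factors.

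Specifically, since $-E$ is strictly lower triangular, $D-\omega E$ is lower triangular with diagonal entries equal to those of $D$, so $\det(D-\omega E)=\det(D)$. Similarly, $F$ is strictly upper triangular, so $(1-\omega)D+\omega F$ is upper triangular with diagonal entries $(1-\omega)a_{ii}$, giving $\det\bigl((1-\omega)D+\omega F\bigr)=(1-\omega)^n\det(D)$. Combining these yields
\[
\det(H_{\rm SOR})=(1-\omega)^n.
\]

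Next I would use the fact that the determinant is the product of the eigenvalues of $H_{\rm SOR}$. If $\lambda_1,\dots,\lambda_n$ are these eigenvalues, then
\[
|1-\omega|^n=\bigl|\det(H_{\rm SOR})\bigr|=\prod_{i=1}^n|\lambda_i|\leq \rho(H_{\rm SOR})^n,
\]
so $|1-\omega|\leq\rho(H_{\rm SOR})$. Finally, convergence of SOR requires $\rho(H_{\rm SOR})<1$, which forces $|1-\omega|<1$, completing the argument.

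There is no real obstacle here; the only thing to be careful about is observing that the diagonal of $A$ must be nonsingular (which is assumed throughout, since $D^{-1}$ appears in the classical splittings) so that $\det(D)\neq 0$ and $D-\omega E$ is invertible, legitimizing the computation of $\det(H_{\rm SOR})$.
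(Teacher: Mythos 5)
Your argument is correct and is precisely the classical Kahan determinant proof: $\det(H_{\rm SOR})=(1-\omega)^n$ by the triangular structure of $D-\omega E$ and $(1-\omega)D+\omega F$, hence $|1-\omega|\leq\rho(H_{\rm SOR})$, and convergence forces $|1-\omega|<1$. The paper itself gives no proof but cites \cite{Had00,BerP94}, where exactly this argument appears, so your route matches the intended one; your remark that $a_{ii}\neq 0$ is needed to justify the determinant computation is the right (and only) point of care.
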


We now state the classes of matrices for which  SOR method converges, and check the convergence of GSOR method for these classes.
\begin{theorem}\label{thm_Hac94}\rm\cite{Had00}
Let $A\in\mathbb{R}^{n\times n}$ be a symmetric positive definite matrix. If $\omega\in(0,2)$, SOR method converges for all initial iterates $x_{0}\in \mathbb{R}^n$. 
\end{theorem}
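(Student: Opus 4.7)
The plan is to invoke the classical Ostrowski--Reich characterization: for an SPD matrix $A = M - N$, the iteration matrix $M^{-1}N$ satisfies $\rho(M^{-1}N) < 1$ if and only if the symmetric matrix $M + M^T - A$ is positive definite. Assuming (or first establishing) this auxiliary fact, the rest of the argument is a direct computation using the symmetry of $A$.

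First I would record the SOR splitting in the form $A = M - N$ with
\[
M = \tfrac{1}{\omega}(D - \omega E), \qquad N = \tfrac{1}{\omega}\bigl((1-\omega)D + \omega F\bigr),
\]
so that the iteration matrix $(D-\omega E)^{-1}[(1-\omega)D+\omega F]$ equals $M^{-1}N$. Using that $A$ is symmetric, we have $F = E^T$, and hence
\[
M + M^T = \tfrac{1}{\omega}(D-\omega E) + \tfrac{1}{\omega}(D - \omega E^T) = \tfrac{2}{\omega}D - E - F.
\]
Subtracting $A = D - E - F$ gives the key identity
\[
M + M^T - A \;=\; \tfrac{2-\omega}{\omega}\, D.
\]
Since $A$ is SPD, its diagonal entries satisfy $a_{ii} = e_i^T A e_i > 0$, so $D$ is a positive definite diagonal matrix. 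Therefore $\frac{2-\omega}{\omega}D$ is positive definite precisely when $\frac{2-\omega}{\omega} > 0$, i.e.\ when $\omega \in (0,2)$. Applying the auxiliary characterization yields $\rho(M^{-1}N) < 1$, and convergence then follows from Theorem~\ref{thm1.2}.

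The main obstacle is the auxiliary Ostrowski--Reich-style lemma, which is not stated in the excerpt. I would prove it via an energy argument in the $A$-inner product: for the error $e_{k+1} = M^{-1}N e_k$, write $N = M - A$ so that $e_{k+1} = e_k - M^{-1}A e_k$, and compute
\[
\|e_k\|_A^2 - \|e_{k+1}\|_A^2 \;=\; (M^{-1}Ae_k)^T\bigl(M + M^T - A\bigr)(M^{-1}Ae_k).
\]
Positive definiteness of $M + M^T - A$ then forces a strict decrease of $\|e_k\|_A$ whenever $e_k \neq 0$, which combined with a standard compactness/eigenvalue argument yields $\rho(M^{-1}N) < 1$. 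Everything else in the proof is routine linear algebra.
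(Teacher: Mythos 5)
Your proof is correct. Note that the paper does not prove this statement at all -- it is quoted with a citation to Hadjidimos's SOR survey -- so there is no internal argument to compare against; what you have written is essentially the classical Ostrowski--Reich (Householder--John) proof that the cited literature uses. Your computation $M+M^{T}-A=\frac{2-\omega}{\omega}D$ with $M=\frac{1}{\omega}(D-\omega E)$, $F=E^{T}$ is right, the positivity of the diagonal of an SPD matrix gives positive definiteness exactly for $\omega\in(0,2)$, and your energy identity
\[
\|e_k\|_A^2-\|e_{k+1}\|_A^2=(M^{-1}Ae_k)^{T}\bigl(M+M^{T}-A\bigr)(M^{-1}Ae_k)
\]
is the standard way to establish the auxiliary lemma (using that $M=\frac{1}{\omega}(D-\omega E)$ is nonsingular, being triangular with positive diagonal, and passing from the strict decrease of the $A$-norm on the unit sphere to $\|H\|_A<1$ and hence $\rho(H)<1$). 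This is a complete and appropriate proof of the theorem as stated.
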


\begin{remark}\rm Theorem~\ref{thm_Hac94} and Theorem~\ref{nec_SOR} show that for SPD-matrices, SOR method converges if and only if $0<\omega<2$. But the result doesn't carry over to GSOR method. Following examples illustrate the fact.
\end{remark}
\begin{example}\rm Consider the symmetric positive definite matrix 
\[A=\left[\begin{array}{rrrr}
5 & 1 & 4 & 2\\
1 & 5 & 3 & 2\\
4 & 3 & 5 & 4\\
2 & 2 & 4 & 5
\end{array}\right].\]
Take $m=2$ so that $M_2=(T_2-\omega E_2)$, and $N_2=(1-\omega)T_2+\omega F_2$. For $\omega =1.8$, $\rho(M_{2}^{-1}N_{2})=1.1511$. Hence, GSOR doesn't converge. So, Theorem~\ref{thm_Hac94} does not hold for GSOR method.
\end{example}
\begin{example}\rm Consider the symmetric positive matrix $A$ defined in example~\ref{exam2}.  Take $m=1$, and $\omega =0.6$. If $M_{1}=T_{1}-\omega E_{1}$ and $N_{1}=(1-\omega)T_{1}+\omega F_{1}$, then $\rho(H_{GSOR})=\rho(M_{1}^{-1}N_{1})=1.7649>1$. Hence GSOR method doesn't for converge for symmetric positive definite matrices even if $0<\omega<1$.
\end{example}

\begin{theorem}\label{thm_Had}\rm \cite{Had00, BouV79}
 If $A \in \mathbb{R}^{n}$ is an $L$-matrix and $\omega \in (0,1]$ then
 $\rho {(H_J)}< 1 $ if and only if  $\rho({H_{SOR}})<1$. 
\end{theorem}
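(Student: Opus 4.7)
My plan is to show that both convergence criteria in the theorem are equivalent to the single condition $A^{-1}\geq 0$ (equivalently, by Theorem~\ref{charM}(iv), to $A$ being an $M$-matrix). The bridge is the standard theorem on \emph{regular splittings}: if $A=M-N$ with $M^{-1}\geq 0$ and $N\geq 0$, then $\rho(M^{-1}N)<1$ if and only if $A$ is nonsingular with $A^{-1}\geq 0$. Once I exhibit both the Jacobi and the SOR splittings as regular splittings of $A$ (or of a positive multiple of $A$) under the hypotheses of the theorem, the biconditional is immediate: each of $\rho(H_J)<1$ and $\rho(H_{SOR})<1$ is equivalent to $A$ being a nonsingular $M$-matrix, and hence to each other.

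For the Jacobi splitting $A=D-(E+F)$, regularity is immediate from the $L$-matrix hypothesis: $D$ has positive diagonal (so $D^{-1}\geq 0$) and $E,F\geq 0$. For SOR I would work with the rescaled splitting $\omega A=(D-\omega E)-\bigl[(1-\omega)D+\omega F\bigr]$; the iteration matrix $H_{\rm SOR}=(D-\omega E)^{-1}[(1-\omega)D+\omega F]$ is insensitive to the scalar $\omega$, so regularity of the rescaled splitting is what matters. Because $\omega\in(0,1]$, $D>0$ and $F\geq 0$, the second factor $(1-\omega)D+\omega F$ is entrywise nonnegative. For the first factor I would write $D-\omega E=D(I-\omega D^{-1}E)$ and use that $\omega D^{-1}E$ is nonnegative and strictly lower triangular, hence nilpotent; the Neumann series $(I-\omega D^{-1}E)^{-1}=\sum_{k=0}^{n-1}(\omega D^{-1}E)^{k}$ is then a finite sum of nonnegative matrices, and multiplying by $D^{-1}\geq 0$ gives $(D-\omega E)^{-1}\geq 0$.

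With both splittings regular, the regular-splitting theorem closes the argument. The main obstacle I anticipate is that this characterization is not formally recorded in the excerpt; within the paper's toolkit it can nonetheless be derived directly. One direction uses the identity $(I-M^{-1}N)^{-1}=A^{-1}M=I+A^{-1}N$, which is $\geq 0$ once $A^{-1}\geq 0$ and $N\geq 0$; combining this with a Perron--Frobenius argument in the spirit of Theorem~\ref{thm_Horn_n} yields $\rho(M^{-1}N)<1$. The converse uses that $M^{-1}N\geq 0$ together with $\rho(M^{-1}N)<1$ forces $(I-M^{-1}N)^{-1}=\sum_{k\geq 0}(M^{-1}N)^{k}\geq 0$, whence $A^{-1}=(I-M^{-1}N)^{-1}M^{-1}\geq 0$. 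A secondary point worth flagging is that the restriction $\omega\in(0,1]$ is essential: for $\omega>1$ the diagonal of $(1-\omega)D+\omega F$ becomes negative, regularity of the SOR splitting fails, and the above reduction breaks down.
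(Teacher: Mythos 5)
Your argument is correct, but note that the paper does not prove this statement at all: Theorem~\ref{thm_Had} is quoted from the literature (Hadjidimos; Bouni--Varga, a Stein--Rosenberg-type result), so there is no in-paper proof to match. Your route is the standard one and is fully consistent with the paper's toolkit: for an $L$-matrix, $A=D-(E+F)$ is a regular splitting, and for $\omega\in(0,1]$ the splitting $\omega A=(D-\omega E)-[(1-\omega)D+\omega F]$ is also regular (your nilpotency/Neumann-series argument for $(D-\omega E)^{-1}\geq 0$ and the sign check on $(1-\omega)D+\omega F$ are exactly right), so the regular-splitting theorem makes each of $\rho(H_J)<1$ and $\rho(H_{SOR})<1$ equivalent to $A$ being nonsingular with $A^{-1}\geq 0$, hence to each other; the singular case is handled automatically since then both spectral radii are $\geq 1$. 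One small correction: the regular-splitting characterization you thought was missing is in fact recorded in the paper as Theorem~\ref{thm_reg} (and the SOR half of your equivalence is essentially the paper's Theorem~\ref{thm_SOR_M}), so you could simply cite those rather than re-derive the characterization; your sketch of that derivation via $(I-M^{-1}N)^{-1}=A^{-1}M$ and Theorem~\ref{thm_Horn_n} is nonetheless sound, provided you also note that $\lambda=1$ is excluded because $I-M^{-1}N=M^{-1}A$ is nonsingular. Be aware that the cited theorem in the literature is sharper (it compares the magnitudes of $\rho(H_J)$ and $\rho(H_{SOR})$), but for the biconditional as stated your proof suffices.
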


Since GGS method may not converge for $L$-matrices, GSOR method also may not converge for $\omega=1$. Following example shows that even if $0<\omega<1$, GSOR method (\ref{GSOR}) may not converge for $L$-matrices.

\begin{example}\label{exp3.1}\rm Consider the $L$-matrix $A$ defined in example~\ref{exam3}.  Take $m=1$, and $\omega =0.9$. If $M_{1}=T_{1}-\omega E_{1}$ and $N_{1}=(1-\omega)T_{1}+\omega F_{1}$, then $\rho(H_{GSOR})=\rho(M_{1}^{-1}N_{1})=2.6705>1$. Hence, GSOR method doesn't for converge for $L$-matrices.
Again if we take $\omega =0.4$, then $\rho(H_{GSOR})=0.6<1$. In this case GSOR method converges. Note that $0<\omega <1$ for both the examples.
\end{example}
\begin{remark}\rm From example~\ref{exp3.1} and example~\ref{exam3}, we observe that Theorem~\ref{thm_Had} does not hold for GJ and GSOR methods.
\end{remark}

Next important class of our consideration is the class of $M$-matrices. From the following theorem, it is known that if $0<\omega\leq 1$, then SOR method converges for $M$-matrices.

\begin{theorem}\label{thm_SOR_M}\rm~\cite{BerP94} Let $A$ be matrix with off-diagonal entries are non-positive, and $0<\omega\leq 1$. Then SOR method converges if and only if $A$ is a nonsingular $M$-matrix.
\end{theorem}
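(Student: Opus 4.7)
The plan is to decompose the biconditional using the standard SOR splitting $\omega A = M_\omega - N_\omega$ with $M_\omega = D - \omega E$ and $N_\omega = (1-\omega)D + \omega F$, so that $H_{\mathrm{SOR}} = M_\omega^{-1}N_\omega$ is the SOR iteration matrix. The $Z$-matrix hypothesis on $A$ forces $E\geq 0$ and $F\geq 0$ entrywise, and for a nonsingular $M$-matrix the diagonal $D$ is strictly positive (an extra input I would extract first via Theorem~\ref{charM}).

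For the sufficiency direction, I would assume $A$ is a nonsingular $M$-matrix. Theorem~\ref{charM} gives $A^{-1}\geq 0$ and $D>0$. With $0<\omega\leq 1$, $M_\omega$ is lower triangular with positive diagonal and nonpositive strict lower part $-\omega E$; forward substitution applied to the columns of $I$ produces $M_\omega^{-1}\geq 0$. Simultaneously $N_\omega = (1-\omega)D + \omega F \geq 0$. Hence $\omega A = M_\omega - N_\omega$ is a regular splitting and, by Varga's regular-splitting theorem (applicable because $A^{-1}\geq 0$), $\rho(M_\omega^{-1}N_\omega)<1$. Theorem~\ref{thm1.2} then delivers SOR convergence.

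For the necessity direction, suppose $\rho(H_{\mathrm{SOR}})<1$. The plan is to invert the splitting identity,
\[A^{-1}\;=\;\omega\,(M_\omega-N_\omega)^{-1}\;=\;\omega\,(I-M_\omega^{-1}N_\omega)^{-1}M_\omega^{-1},\]
expand $(I-M_\omega^{-1}N_\omega)^{-1}$ as the convergent Neumann series $\sum_{k\geq 0}(M_\omega^{-1}N_\omega)^k$, and conclude $A^{-1}\geq 0$ entrywise, provided $M_\omega^{-1}$, $N_\omega$, and consequently $M_\omega^{-1}N_\omega$ are all nonnegative. Once $A^{-1}\geq 0$ is in hand, the fact that $A$ is a nonsingular $Z$-matrix with nonnegative inverse identifies it as a nonsingular $M$-matrix via Theorem~\ref{charM}.

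The main obstacle is the bootstrap giving $M_\omega^{-1}\geq 0$ and $N_\omega\geq 0$ in the necessity direction: both conditions rely on $D>0$, which is not an explicit hypothesis and does not follow transparently from $\rho(H_{\mathrm{SOR}})<1$ alone. I would handle this by establishing $D>0$ separately --- for instance, by inspecting the diagonal entries of $H_{\mathrm{SOR}}$ and appealing to Theorem~\ref{thm_Horn_n} to show that any nonpositive $d_{ii}$ would produce an inequality forcing $\rho(H_{\mathrm{SOR}})\geq 1$, contradicting the convergence assumption. This diagonal-sign reduction is the technically delicate step; the rest of the argument is routine manipulation of nonnegative matrices.
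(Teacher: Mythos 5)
This statement is quoted from \cite{BerP94}; the paper itself supplies no proof, so your attempt has to stand on its own. Your sufficiency direction is correct and standard: for a nonsingular $M$-matrix the diagonal is positive, so with $E,F\geq 0$ and $0<\omega\leq 1$ the matrix $M_\omega=D-\omega E$ is a triangular $Z$-matrix with positive diagonal (hence $M_\omega^{-1}\geq 0$), $N_\omega=(1-\omega)D+\omega F\geq 0$, and $\omega A=M_\omega-N_\omega$ is a regular splitting; Theorem~\ref{thm_reg} together with $A^{-1}\geq 0$ gives $\rho(H_{\mathrm{SOR}})<1$, and Theorem~\ref{thm1.2} gives convergence. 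This is the same machinery the paper uses for its own Theorem~\ref{Thm_GSOR_M}.

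The necessity direction, however, has a genuine gap, and it sits exactly at the step you flagged: $D>0$ cannot be extracted from $\rho(H_{\mathrm{SOR}})<1$. Take $A=-I$ (any order): it is a nonsingular $Z$-matrix that is not an $M$-matrix, yet $E=F=0$ gives $H_{\mathrm{SOR}}=(1-\omega)I$ and $\rho(H_{\mathrm{SOR}})=|1-\omega|<1$ for every $\omega\in(0,1]$. So no appeal to Theorem~\ref{thm_Horn_n} (or anything else) can show that a nonpositive diagonal entry forces $\rho(H_{\mathrm{SOR}})\geq 1$; in fact the ``only if'' half of the statement is false for general $Z$-matrices, and the result in the literature carries the additional hypothesis that the diagonal of $A$ is positive (an $L$-matrix-type assumption, often normalized to $D=I$), which the paper's wording drops. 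Once $a_{ii}>0$ is assumed, your argument closes cleanly without the problematic reduction: $M_\omega^{-1}\geq 0$ and $N_\omega\geq 0$ hold a priori, $\rho(H_{\mathrm{SOR}})<1$ makes $I-H_{\mathrm{SOR}}$ invertible so $\omega A=M_\omega(I-H_{\mathrm{SOR}})$ is nonsingular, the Neumann series gives $A^{-1}=\omega\,(I-H_{\mathrm{SOR}})^{-1}M_\omega^{-1}\geq 0$, and Theorem~\ref{charM} identifies $A$ as a nonsingular $M$-matrix. In short: repair the hypothesis, not the proof; the ``diagonal-sign reduction'' you proposed is neither available nor needed.
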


We now prove the convergence of GSOR method for nonsingular $M$-matrices, for which we require the concept of regular splitting of a matrix and few characterizations of $M$-matrices.

\begin{definition}\rm For $n\times n$ matrices $A, ~M, ~N$, a splitting $A=M-N$ is called a regular splitting of $A$ if  $M^{-1}\geq 0$ and $N\geq 0$.
\end{definition}
\begin{theorem}\rm\cite{Saa03}\label{thm_reg} Let $A=M-N$ be a regular splitting of $A$. Then $\rho(M^{-1}N)<1$ if and only if $A$ is nonsingular and $A^{-1}\geq 0$.
\end{theorem}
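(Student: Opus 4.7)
The plan is to prove the two implications separately. The forward direction is a Neumann-series argument; the reverse direction uses the Perron--Frobenius theorem together with a sign-propagation argument that relies on $A^{-1} \geq 0$.

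For the forward direction, assume $\rho(M^{-1}N) < 1$. I would factor $A = M - N = M(I - M^{-1}N)$. Since $\rho(M^{-1}N) < 1$, the matrix $I - M^{-1}N$ is invertible with inverse given by the convergent geometric series $\sum_{k=0}^{\infty} (M^{-1}N)^k$, so $A$ is nonsingular with $A^{-1} = (I - M^{-1}N)^{-1} M^{-1}$. The regular-splitting hypothesis gives $M^{-1} \geq 0$ and $N \geq 0$, hence $M^{-1}N \geq 0$ entrywise. Each partial sum of the Neumann series is therefore nonnegative, so $(I - M^{-1}N)^{-1} \geq 0$, and multiplying on the right by $M^{-1} \geq 0$ yields $A^{-1} \geq 0$.

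For the reverse direction, assume $A$ is nonsingular with $A^{-1} \geq 0$, and let $T = M^{-1}N \geq 0$. By the Perron--Frobenius theorem there exists $x \geq 0$, $x \neq 0$, with $Tx = \lambda x$ for $\lambda = \rho(T)$. Then $Nx = \lambda Mx$, and so $Ax = (M - N)x = (1-\lambda) Mx$. If $\lambda = 1$, then $Ax = 0$, contradicting nonsingularity of $A$. If $\lambda > 1$, then $\lambda > 0$ lets me write $Mx = \lambda^{-1} Nx$, so that $Ax = (\lambda^{-1} - 1) Nx = \tfrac{1 - \lambda}{\lambda} Nx$. The scalar $\tfrac{1-\lambda}{\lambda}$ is negative and $Nx \geq 0$, so $Ax \leq 0$. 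Applying the nonnegative matrix $A^{-1}$ to $-Ax \geq 0$ gives $-x \geq 0$, which together with $x \geq 0$ and $x \neq 0$ is a contradiction. Hence $\lambda < 1$; the case $\lambda = 0$ is trivial.

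The main obstacle is the reverse implication: one must produce a distinguished nonnegative eigenvector for $T$ realising the spectral radius (this is exactly the content one borrows from Perron--Frobenius), and then carefully track how nonnegativity of $A^{-1}$ forces sign contradictions in the regimes $\lambda = 1$ and $\lambda > 1$. The forward direction is essentially a bookkeeping exercise once the Neumann series is in hand.
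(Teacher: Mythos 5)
Your proof is correct. Note, however, that the paper offers no proof of this statement at all: it is quoted as a known result with a citation to Saad, so there is nothing internal to compare against. Your forward direction (Neumann series applied to $A=M(I-M^{-1}N)$, with nonnegativity of each partial sum giving $(I-M^{-1}N)^{-1}\geq 0$ and hence $A^{-1}=(I-M^{-1}N)^{-1}M^{-1}\geq 0$) is the standard argument. Your reverse direction is a legitimate but slightly different route from the usual textbook one: you take a Perron vector $x\geq 0$ of $T=M^{-1}N$ with $Tx=\rho(T)x$ and rule out $\rho(T)=1$ (since $Ax=(1-\rho)Mx=0$ would contradict nonsingularity) and $\rho(T)>1$ (since then $Ax=\frac{1-\rho}{\rho}Nx\leq 0$, and $A^{-1}\geq 0$ forces $-x\geq 0$, hence $x=0$); the switch from $Mx$ to $Nx$ in the second case is exactly the right move, since only $Nx$ has a guaranteed sign. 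The classical proof in Varga/Saad instead establishes the identity $\rho(M^{-1}N)=\rho(A^{-1}N)/\bigl(1+\rho(A^{-1}N)\bigr)$, which yields strictly more (e.g.\ comparison theorems between different regular splittings), whereas your sign-propagation argument is more elementary and suffices for the equivalence as stated. One small presentational point: you should say explicitly that $M$ is invertible as part of the regular-splitting hypothesis (the condition $M^{-1}\geq 0$ presupposes it), since both directions use $M^{-1}$ from the outset.
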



\begin{theorem}\label{Thm_GSOR_M}\rm
If $0<\omega\leq 1$, then GSOR method converges for $M$-matrices.
\end{theorem}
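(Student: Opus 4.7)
The plan is to identify a regular splitting whose iteration matrix coincides with $H_{\rm GSOR}$ and then invoke Theorem~\ref{thm_reg}. The natural decomposition $\omega A=(T_m-\omega E_m)-((1-\omega)T_m+\omega F_m)$ is itself \emph{not} regular: for $\omega<1$ the factor $(1-\omega)T_m+\omega F_m$ inherits the nonpositive within-band off-diagonal entries of $T_m$, so it fails to be entrywise nonnegative. I would circumvent this by left-multiplying the splitting by $T_m^{-1}$, which keeps the iteration matrix unchanged but produces a regular splitting of a different matrix.

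A preliminary step is to observe that $T_m$ is itself a nonsingular $M$-matrix, so $T_m^{-1}\geq 0$. Since $A$ is an $M$-matrix, the off-diagonal entries of $T_m$ (which coincide with those of $A$ inside the band and are zero outside) are nonpositive, so $T_m$ is a $Z$-matrix. By Theorem~\ref{charM}(ii) there is $y>0$ with $Ay>0$, whence $T_m y = Ay+(E_m+F_m)y \geq Ay>0$ since $E_m,F_m\geq 0$; Theorem~\ref{charM}(iv) then gives $T_m^{-1}\geq 0$. Setting $L_m:=T_m^{-1}E_m\geq 0$ and $U_m:=T_m^{-1}F_m\geq 0$, we have $L_m+U_m=H_{\rm GJ}$, and monotonicity of the spectral radius on nonnegative matrices combined with Theorem~\ref{thm_GJ_GGS_M} yields $\rho(L_m)\leq \rho(L_m+U_m)=\rho(H_{\rm GJ})<1$, so $\rho(\omega L_m)<1$ for $0<\omega\leq 1$.

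I would then put $\widetilde{M}:=I-\omega L_m$ and $\widetilde{N}:=(1-\omega)I+\omega U_m$. A direct computation using $(I-\omega L_m)^{-1}=(T_m-\omega E_m)^{-1}T_m$ gives
\[
\widetilde{M}-\widetilde{N}=\omega T_m^{-1}A, \qquad \widetilde{M}^{-1}\widetilde{N}=H_{\rm GSOR}.
\]
This is a regular splitting of $\omega T_m^{-1}A$: $\widetilde{N}\geq 0$ for $0<\omega\leq 1$, and the Neumann series $\widetilde{M}^{-1}=\sum_{k\geq 0}(\omega L_m)^k$ converges and is entrywise nonnegative. Theorem~\ref{thm_reg} therefore reduces the problem to checking $(\omega T_m^{-1}A)^{-1}=\omega^{-1}A^{-1}T_m\geq 0$, which follows from the one-line identity
\[
A^{-1}T_m = A^{-1}(A+E_m+F_m) = I + A^{-1}(E_m+F_m) \geq 0,
\]
since $A^{-1}\geq 0$ (Theorem~\ref{charM}, as $A$ is an $M$-matrix) and $E_m+F_m\geq 0$. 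The principal obstacle is spotting the right modification of the natural splitting; once pre-multiplication by $T_m^{-1}$ is in hand, the remaining nonnegativity requirement collapses to the identity above.
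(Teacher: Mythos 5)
Your proposal is correct and follows essentially the same route as the paper: both pass to the splitting $\omega T_m^{-1}A=\bigl(I-\omega T_m^{-1}E_m\bigr)-\bigl((1-\omega)I+\omega T_m^{-1}F_m\bigr)$, observe it has the same iteration matrix $H_{\rm GSOR}$, verify it is a regular splitting, and conclude via Theorem~\ref{thm_reg}. The only differences are in sub-justifications: you get $\widetilde{M}^{-1}\geq 0$ from the Neumann series with $\rho(\omega T_m^{-1}E_m)\leq\rho(H_{GJ})<1$ whereas the paper uses semipositivity of $M_1$ via Theorem~\ref{charM}, and your identity $A^{-1}T_m=I+A^{-1}(E_m+F_m)\geq 0$ gives a cleaner (indeed more accurate) justification of the final nonnegativity step than the paper's phrasing.
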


\begin{proof} Consider the decomposition $A=T_{m}-E_{m}-F_{m}$ of $A$. Let $M=T_{m}-\omega E_{m}$, and $N=(1-\omega)T_{m}+\omega F_{m}$. Then $\omega A=M-N$ is the GSOR splitting.  To show that $\rho(M^{-1}N)<1$. 

Since $A$ is an $M$-matrix, $T_{m}$ is also an $M$-matrix, and hence $T_{m}^{-1}\geq 0$. If $M_{1}=I-\omega T_{m}^{-1}E_{m}$, and $N=(1-\omega)I+\omega T_{m}^{-1}F_{m}$, then $M^{-1}N=M_{1}^{-1}N_{1}$ so that we need to show that $\rho(M_{1}^{-1}N_{1})<1$.
We now show  that $M_{1}$ is an $M$-matrix. Note that $M_{1}$ is an $Z$-matrix. Since $A$ is an $M$-matrix, by Theorem~\ref{charM} there is $x>0$ such that $Ax>0$.\\
Now,
\begin{eqnarray}
Ax>0&\Rightarrow& (T_{m}-E_{m}-F_{m})x>0\nonumber\\
&\Rightarrow& (T_{m}-E_{m})x>0~~~~~~~~~~\text{ as } F_{m}\geq 0,~x>0\nonumber\\
&\Rightarrow& (T_{m}-\omega E_{m})x>0~~~~~~~~~~\text{ as } 0<\omega\leq 1\nonumber\\
&\Rightarrow& T_{m}^{-1}(T_{m}-\omega E_{m})x>0 ~~~~~~~~~~\text{ as } T_{m}^{-1}\geq 0\nonumber\\
&\Rightarrow& M_{1}x>0\nonumber
\end{eqnarray}
Hence there is $x>0$ and $M_{1}x>0$, which implies $M_{1}^{-1}\geq 0$ by Theorem~\ref{charM}. Also $N_{1}=(1-\omega)I+\omega T_{m}^{-1}F_{m}\geq 0$, so $\omega T_{m}^{-1}A=M_{1}-N_{1}$ is a regular splitting of $\omega T_{m}^{-1}A$. But $\omega T_{m}^{-1}A$ is nonsingular and $(\omega T_{m}^{-1}A)^{-1}=\omega^{-1}A^{-1}T_{m}\geq 0$ since both $A^{-1}$ and $T_{m}$ are nonnegative. Hence $\rho(M_{1}^{-1}N_{1})<1$ by Theorem~\ref{thm_reg}. Thus, GSOR method converges.
\end{proof}
\begin{theorem}\rm\label{thm_SOR_M2} Let $A$ be an $M$-matrix, and $0<\omega<\frac{2}{1+\rho(H_{GJ})}$. If $\rho(T_{m}^{-1}E_{m})<\frac{1}{\omega}$, then GSOR method converges for any initial guess $x_{0}$.
\end{theorem}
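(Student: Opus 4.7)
The plan is to mimic the Kahan/Ostrowski-style argument for standard SOR and adapt it to the banded splitting. First, I would set $L = T_m^{-1}E_m$ and $U = T_m^{-1}F_m$. Because $A$ is an $M$-matrix, Theorem~\ref{charM} supplies $x > 0$ with $Ax > 0$; since $E_m, F_m \geq 0$, the same $x$ gives $T_m x = Ax + (E_m + F_m)x > 0$, so $T_m$ is itself an $M$-matrix and $T_m^{-1} \geq 0$, whence $L, U \geq 0$. Setting $B = L + U$, we have $B = T_m^{-1}(E_m + F_m) = H_{GJ}$, and Theorem~\ref{thm_GJ_GGS_M} yields $\rho(B) < 1$. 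The hypothesis $\rho(L) < 1/\omega$ then makes $I - \omega L$ invertible via a Neumann series, which lets me rewrite the iteration matrix as $H_{\rm GSOR} = (I-\omega L)^{-1}[(1-\omega)I + \omega U]$.

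Next, I would take an arbitrary eigenvalue $\mu$ of $H_{\rm GSOR}$ with eigenvector $x \neq 0$ and clear the inverse to obtain
\[
\omega(U + \mu L)x = (\mu + \omega - 1)x.
\]
Taking componentwise absolute values and using $L, U \geq 0$ to push $|\cdot|$ inside sums gives
\[
|\mu + \omega - 1|\,|x| \leq \omega(|\mu|L + U)|x|.
\]
Since $|x| \geq 0$ and $|x| \neq 0$, Theorem~\ref{thm_Horn_n} forces the scalar bound $|\mu + \omega - 1| \leq \omega\,\rho(|\mu|L + U)$.

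The crux is to derive a contradiction from $|\mu| \geq 1$. Under that assumption, componentwise $|\mu|L + U \leq |\mu|(L+U) = |\mu|B$, so monotonicity of the spectral radius on nonnegative matrices yields $\rho(|\mu|L+U) \leq |\mu|\rho(B)$. Moreover, $\omega < 2/(1+\rho(B)) < 2$ forces $|\omega - 1| < 1 \leq |\mu|$, which legitimizes the reverse triangle inequality $|\mu + \omega - 1| \geq |\mu| - |\omega - 1|$. Chaining these produces
\[
|\mu|\bigl(1 - \omega\rho(B)\bigr) \leq |\omega - 1|.
\]
A short case-split on $\omega \leq 1$ versus $\omega > 1$ identifies the hypothesis $\omega < 2/(1+\rho(B))$ with the strict inequality $|\omega - 1| < 1 - \omega\rho(B)$, whose right-hand side is positive because $\omega\rho(B) < 2\rho(B)/(1+\rho(B)) < 1$. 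Therefore $|\mu| < 1$, contradicting $|\mu| \geq 1$, and hence $\rho(H_{\rm GSOR}) < 1$. The main obstacle I anticipate is this final algebraic case-split: one must carefully unpack $|\omega - 1|$ depending on whether $\omega$ exceeds $1$ and confirm that $1 - \omega\rho(B)$ stays strictly positive throughout so the chained inequalities do not collapse.
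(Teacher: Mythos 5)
Your proposal is correct, and it reaches the theorem by a route that is close in spirit to, but structurally different from, the paper's. The paper splits into two cases: for $0<\omega\le 1$ it simply invokes Theorem~\ref{Thm_GSOR_M}, while for $\omega>1$ it majorizes $|H_{GSOR}|$ entrywise by a nonnegative matrix $G$ obtained from $(I-\omega T_m^{-1}E_m)^{-1}\ge 0$ (this is where the hypothesis $\rho(T_m^{-1}E_m)<\frac{1}{\omega}$ enters, via the Neumann series) and then applies the Perron vector of $G$ to reach $\lambda(1-\omega\rho(H_{GJ}))\le\omega-1$, contradicting $\omega<\frac{2}{1+\rho(H_{GJ})}$. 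You instead work directly with an arbitrary eigenpair of $H_{GSOR}$, take componentwise absolute values in $\omega(U+\mu L)x=(\mu+\omega-1)x$, and combine Theorem~\ref{thm_Horn_n} with monotonicity of the spectral radius on nonnegative matrices to get $|\mu|\bigl(1-\omega\rho(H_{GJ})\bigr)\le|\omega-1|<1-\omega\rho(H_{GJ})$; this treats $0<\omega\le 1$ and $\omega>1$ uniformly (the case split appears only when unpacking $|\omega-1|$), uses the hypothesis $\rho(T_m^{-1}E_m)<\frac{1}{\omega}$ only to guarantee invertibility of $T_m-\omega E_m$, and never needs the majorant $G$ or the nonnegativity of its resolvent, nor the regular-splitting result of Theorem~\ref{Thm_GSOR_M}. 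What the paper's version buys is a shorter new argument (only $\omega>1$ is genuinely new there); what yours buys is self-containedness and a single unified estimate. Two small points to tighten: Theorem~\ref{thm_Horn_n} presupposes a nonzero nonnegative matrix, so note that $|\mu|L+U\ge 0$ and dispose of the degenerate case $L=U=0$ (where $\mu=1-\omega$ and $|\mu|<1$ trivially); and the positivity of $1-\omega\rho(H_{GJ})$ should be argued as $\omega\rho(H_{GJ})\le\frac{2\rho(H_{GJ})}{1+\rho(H_{GJ})}<1$ with care when $\rho(H_{GJ})=0$, where it holds outright.
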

\begin{proof} If $0<\omega\leq 1$, then it is true by Theorem~\ref{Thm_GSOR_M}. Let $\omega >1$. Consider the matrix $G=(I-\omega T_{m}^{-1}E_{m})^{-1}((\omega-1)T_{m}+\omega E_{m})$. Since $\rho(T_{m}^{-1}E_{m})<\frac{1}{\omega}$ and $T_{m}^{-1}E_{m}\geq 0$,  $(I-\omega T_{m}^{-1}E_{m})^{-1}\geq 0$ and so $G\geq 0$. Note that $|H_{GSOR}|\leq G$, which implies that $\rho(H_{GSOR})\leq \rho(G)$. It suffices to show that $\rho(G)<1$.\\
Let $\lambda=\rho(G)$ and $x\geq 0$ be the Perron vector of $G$ so that $Gx=\lambda x$, which implies 
\begin{equation}\label{eqn_M2}
(\omega T_{m}^{-1}F_{m}+\lambda\omega T_{m}^{-1}E_{m})=(\lambda +1-\omega)x
\end{equation}
 If $\lambda\geq 1$, then from equation (\ref{eqn_M2}), we have that $(1+\lambda-\omega)x\leq(\lambda\omega H_{GJ})x$, that is, $\lambda(1-\omega\rho(H_{GJ}))\leq \omega -1$ which implies that, $\omega\geq\frac{2}{1+\rho(H_{GJ})}$, since $\lambda \geq 1$. This is a contradiction. Hence $GSOR$ method converges.
\end{proof}

\begin{remark}\rm Theorem~\ref{thm_SOR_M2} shows that if $\rho(B_{GJ})$ is  small, then GSOR method may converge for $1<\omega<2$.
\end{remark}

Following theorem assures the convergence of SOR method for SDD matrices whenever $0<\omega\leq 1$.
\begin{theorem}\label{thm_DD_SOR}\rm\cite{BerP94} If $A\in \mathbb{C}^{n,n}$ is a SDD-matrix, then Jacobi method converge for $A$. Also if $0<\omega\leq 1$, then SOR method also converges.
\end{theorem}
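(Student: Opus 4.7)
The plan is to verify the two assertions separately, in both cases by working with the iteration matrix directly rather than by appealing to more general machinery.

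For the Jacobi claim I would simply bound $\|H_J\|_\infty$, where $H_J=D^{-1}(E+F)$. Since row $i$ of $H_J$ has entries $-a_{ij}/a_{ii}$ for $j\neq i$, the $i$-th absolute row sum equals $\sum_{j\neq i}|a_{ij}|/|a_{ii}|$, which is strictly less than $1$ by the SDD hypothesis. Hence $\|H_J\|_\infty<1$, and therefore $\rho(H_J)\le\|H_J\|_\infty<1$, so by Theorem~\ref{thm1.2} Jacobi converges. This part is routine.

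For SOR the main idea is to assume, toward a contradiction, that some eigenvalue $\lambda$ of $H_{SOR}=(D-\omega E)^{-1}[(1-\omega)D+\omega F]$ satisfies $|\lambda|\ge 1$. From the eigenequation $[(1-\omega)D+\omega F]x=\lambda(D-\omega E)x$ with $x\neq 0$, I would rearrange to
\[
(\lambda-1+\omega)\,a_{ii}x_i \;=\; \omega\sum_{j>i}(-a_{ij})x_j \;+\; \lambda\omega\sum_{j<i}(-a_{ij})x_j.
\]
Pick the index $i$ that attains $|x_i|=\|x\|_\infty$ and take absolute values. Writing $l_i=\sum_{j<i}|a_{ij}|/|a_{ii}|$ and $r_i=\sum_{j>i}|a_{ij}|/|a_{ii}|$, the SDD condition gives $l_i+r_i<1$, and the inequality becomes
\[
|\lambda-1+\omega| \;\le\; \omega r_i + |\lambda|\omega l_i.
\]

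The key step, which is where the restriction $0<\omega\le 1$ enters, is to lower-bound the left side. Because $1-\omega\ge 0$, the triangle inequality gives $|\lambda-1+\omega|\ge|\lambda|-(1-\omega)$, so the assumption $|\lambda|\ge 1$ yields $|\lambda|(1-\omega l_i)\le 1-\omega(1-r_i)$, i.e.\ $|\lambda|\le(1-\omega+\omega r_i)/(1-\omega l_i)$. A short calculation then shows this upper bound is strictly less than $1$ precisely when $l_i+r_i<1$, which is exactly the SDD hypothesis, contradicting $|\lambda|\ge 1$. Therefore $\rho(H_{SOR})<1$ and SOR converges by Theorem~\ref{thm1.2}. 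The main (really only) obstacle is the lower bound on $|\lambda-1+\omega|$: it is the one place where $\omega\le 1$ is genuinely used, and forgetting it would break the argument for overrelaxation.
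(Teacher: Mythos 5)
Your proof is correct. Note that the paper does not prove this statement at all: Theorem~\ref{thm_DD_SOR} is quoted from the literature (Berman--Plemmons), so there is no in-paper argument to compare against; the closest thing is the paper's own proof of the generalized version, Theorem~\ref{thm_DD_GSOR}, of which the present theorem is the $m=0$ case. The two arguments share the same core inequality (comparing $|\lambda-1+\omega|$ with $\omega$ and $|\lambda|\omega$ under $0<\omega\le 1$ and $|\lambda|\ge 1$), but the mechanics differ: the paper rewrites $\det(\lambda M-N)=0$ as $\det\bigl(T_m-\alpha E_m-\beta F_m\bigr)=0$ with $|\alpha|,|\beta|\le 1$ and derives a contradiction from the fact that this perturbed matrix is still SDD, hence nonsingular; you instead work componentwise with the eigenvector at a maximal-modulus entry, obtaining $|\lambda|\le(1-\omega+\omega r_i)/(1-\omega l_i)<1$ directly. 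Your route is the more elementary and self-contained one (it even gives an explicit bound on $|\lambda|$), while the determinant formulation is what scales cleanly to the banded splitting $T_m-E_m-F_m$, where a single ``maximal component'' estimate is less convenient because the band part $T_m$ is no longer diagonal. Your identification of where $\omega\le 1$ is used (the lower bound $|\lambda-(1-\omega)|\ge|\lambda|-(1-\omega)$, plus implicitly the positivity of $1-\omega l_i$) is accurate; the only cosmetic quibble is that the inequality $|\lambda|(1-\omega l_i)\le 1-\omega(1-r_i)$ does not itself need $|\lambda|\ge 1$ --- that hypothesis is only needed for the final contradiction.
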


In next theorem  we extend the above theorem for GSOR method.

\begin{theorem}\label{thm_DD_GSOR}\rm
If $0<\omega\leq1$, GSOR method converges for SDD-matrices.
\end{theorem}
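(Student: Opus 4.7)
The plan is to mimic the classical SDD-proof of SOR convergence, adapted to the band splitting. Write the GSOR splitting as $\omega A = M - N$ with $M = T_m - \omega E_m$ and $N = (1-\omega)T_m + \omega F_m$. First I would check that $M$ is nonsingular: its diagonal equals $\mathrm{diag}(A)$, its off-diagonal entries inside the band are $a_{ij}$, and the entries strictly below the band are $\omega a_{ij}$ (everything above the band is zero). Since $0<\omega\le 1$, the row-$i$ off-diagonal sum of $M$ is bounded by $\sum_{j\ne i}|a_{ij}|<|a_{ii}|$, so $M$ is SDD and hence invertible. Therefore $\rho(M^{-1}N)$ is well defined, and by Theorem~\ref{thm1.2} it suffices to prove $\rho(M^{-1}N)<1$.

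To do that, I would argue by contradiction: suppose $\lambda$ is an eigenvalue of $M^{-1}N$ with $|\lambda|\ge 1$. Then $\lambda M - N$ must be singular. A direct computation gives
\[
\lambda M - N \;=\; (\lambda-1+\omega)\,T_m \;-\; \lambda\omega\,E_m \;-\; \omega\,F_m.
\]
On row $i$, the diagonal entry is $(\lambda-1+\omega)a_{ii}$, the off-diagonal entries within the band carry the factor $(\lambda-1+\omega)$, the entries strictly below the band carry the factor $\lambda\omega$, and those strictly above the band carry the factor $\omega$ (all applied to the corresponding entries of $A$).

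The key estimate is that for $|\lambda|\ge 1$ and $0<\omega\le 1$ one has
\[
|\lambda-1+\omega| \;\ge\; |\lambda|-(1-\omega) \;=\; |\lambda|-1+\omega \;\ge\; \max\bigl(|\lambda|\omega,\;\omega\bigr),
\]
where the last inequality follows from $(|\lambda|-1)(1-\omega)\ge 0$ and $|\lambda|\ge 1$. Using this uniform bound, the row-$i$ off-diagonal absolute sum of $\lambda M - N$ is at most $|\lambda-1+\omega|\sum_{j\ne i}|a_{ij}|$, which, by the SDD hypothesis on $A$, is strictly less than $|\lambda-1+\omega||a_{ii}|$, the modulus of the diagonal entry. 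Hence $\lambda M - N$ is itself SDD and therefore nonsingular, contradicting that $\lambda$ is an eigenvalue.

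I expect the only nontrivial step to be the three-way comparison $|\lambda-1+\omega|\ge\max(|\lambda|\omega,\omega)$; once that is in hand, the SDD estimate on $\lambda M - N$ is immediate from the structure of $T_m, E_m, F_m$. No restriction beyond $0<\omega\le 1$ is used, and the case $m=0$ recovers the classical SOR result in Theorem~\ref{thm_DD_SOR}.
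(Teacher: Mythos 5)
Your proposal is correct and follows essentially the same route as the paper's proof: assume $|\lambda|\ge 1$, note that $\lambda M-N=(\lambda-1+\omega)T_m-\lambda\omega E_m-\omega F_m$ must be singular, bound the weights on the below-band and above-band entries by $|\lambda-1+\omega|$, and conclude that this matrix is strictly diagonally dominant and hence nonsingular, a contradiction. The only differences are cosmetic: you derive the key bound $|\lambda-1+\omega|\ge\max(|\lambda|\omega,\omega)$ by the triangle inequality and $(|\lambda|-1)(1-\omega)\ge 0$ instead of the paper's expansion of $|\lambda-1+\omega|^2-|\lambda\omega|^2$ in terms of $\cos\theta$, and you work with $\lambda M-N$ directly (also explicitly checking that $M$ is invertible) rather than with the normalized matrix $S=T_m-\alpha E_m-\beta F_m$.
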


\begin{proof}\rm
Let $A$ be a SDD-matrix. Consider the decomposition $A=T_{m}-E_{m}-F_{m}$ of $A$. Let $M=T_{m}-\omega E_{m}$, and $N=(1-\omega)T_{m}+\omega F_{m}$.  To show that $\rho(M^{-1}N)<1$. 

 Take $H=M^{-1}N$, and $\lambda \in \sigma(H)$. If $|\lambda| \geq 1$, then

\(\begin{array}{lll}
 \det (\lambda I-H)=0 &\Rightarrow& \det(\lambda I-M^{-1}N)=0\nonumber\\
 &\Rightarrow& \det[ M^{-1}(\lambda M-N)]= 0~~~~~~~~~~  \nonumber\\
 &\Rightarrow& \det (\lambda M-N)=0~~~~~~~~~~  \nonumber\\
 &\Rightarrow& \det\left[\lambda T_m - \omega \lambda E_m -(1-\omega)T_m-\omega F_m \right]=0 ~~~~~~~~~~\nonumber\\
 &\Rightarrow& \det \left[(\lambda-1 + \omega) T_m -\omega \lambda E_m -\omega F_m \right]=0 \nonumber\\
 &\Rightarrow& \det \left[ {T_m -\frac{\lambda \omega }{\lambda-1 + \omega}E_m -\frac{\omega}{\lambda-1 + \omega}F_m} \right]=0 \nonumber
\end{array} \)

Next, we show that
$\dfrac{|\lambda \omega|}{|\lambda -1 +\omega|} < 1$ and $\dfrac{\omega}{|\lambda -1 +\omega|} < 1$.
Write $\lambda=re^{i\theta}$. $|\lambda|>1,~ r \geq 1$. Then

\(\begin{array}{lll}
|\lambda-1+\omega|^2-|\lambda \omega|^2 &=[~\lambda -(1-\omega)]~[\bar{\lambda}-(1-\omega)]-r^2\omega ^2 \nonumber \\
&= r^2-[~\lambda (1-\omega)+\bar{\lambda}(1-\omega)]+(1-\omega)^2-r^2\omega^2 \nonumber\\
&= r^2-2r(1-\omega)\cos\theta  +(1-\omega)^2-r^2\omega^2 \nonumber\\
&\geq r^2-2 r (1-\omega) +(1-\omega)^2-r^2\omega^2 \nonumber \\
&=( r-1+\omega)^2 - r^2\omega^2 \nonumber\\
&=( r-1 )(1-\omega)~~[~r(1+\omega)-(1-\omega)] \nonumber\\
&\geq 0 \nonumber ~~~~~~~~~~~~~[\text { as } r>1,~\omega <1 , ~r(1+\omega)>1 \text{ and } (1-\omega)<1]
\end{array}\)\\\\
Thus,  $|\lambda -1 +\omega| \geq |\lambda \omega| \geq \omega$ implies that
$\dfrac{|\lambda \omega|}{|\lambda -1+\omega|}\leq 1$ and
$\dfrac{ \omega}{|\lambda -1+\omega|}\leq 1$.

Let $S= T_m -\alpha E_m -\beta F_m $, where  $\alpha =\dfrac{\lambda \omega}{\lambda -1 +\omega}$ and $\beta= \dfrac{\omega}{\lambda-1+\omega}$, so that $|\alpha| \leq 1$, $|\beta | \leq 1$.
We now show that $S$ is a SDD-matrix. Note that 
\begin{center}
$S_{ij} = \left\{
        \begin{array}{rl}
               a_{ij}, & |i-j| \leq m \\
              -\alpha a_{ij}  ,& i>j+m\\
              -\beta a_{ij}  ,& j>i+m 
        \end{array}
    \right.$
    \end{center}
    
For any $i$, we have

\begin{center}  \(\begin{array}{ll}
   \sum\limits_{i\neq j}|S_{ij}| 
   &= \sum\limits_{|i-j| \leq m}|a_{ij}| + |\alpha| \sum\limits_{i>j+m}|a_{ij}| + |\beta| \sum\limits_{j>i+m}|a_{ij}| \nonumber\\
   &\leq  \sum\limits_{|i-j| \leq m}|a_{ij}| + |\alpha| \sum\limits_{i>j+m}|a_{ij}| + |\beta| \sum\limits_{j>i+m}|a_{ij}| \nonumber\\
   &= \sum\limits_{i\neq j}|a_{ij}| \nonumber\\ 
   &< |a_{ii}| =|S_{ii}| \nonumber
   \end{array}\)\end{center}
   
This shows that $S$ is SDD-matrix and hence invertible, which is a contradiction to the fact that $\det S=0$ . So $|\lambda| < 1$. Thus GSOR method converges for SDD-matrices.
\end{proof}

In Theorem~\ref{thm_GJ_H} and~\ref{thm_GGS_H}, we proved that GS and GGS methods converge for the class of $H$-matrices.  We now prove it for GSOR method.

\begin{theorem}\rm If $A$ is an $H$-matrix, and $0<\omega\leq 1$, then GSOR method converges.
\end{theorem}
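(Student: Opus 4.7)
The plan is to mimic the comparison-matrix strategy of Theorems~\ref{thm_GJ_H} and~\ref{thm_GGS_H}: transfer convergence from the $M$-matrix $H(A)$, for which GSOR converges by Theorem~\ref{Thm_GSOR_M} when $0<\omega\leq 1$, to $A$ via the $H$-matrix inequality $|T_m^{-1}|\leq H(T_m)^{-1}$. Write $A=T_m-E_m-F_m$ with $T_m=D+R_m$, where $D$ is the diagonal part of $A$, and set $\tilde T_m=|D|-|R_m|$, $\tilde P=\tilde T_m^{-1}|E_m|$, $\tilde Q=\tilde T_m^{-1}|F_m|$. Since $\tilde T_m$ is a $Z$-matrix dominating $H(A)=\tilde T_m-|E_m|-|F_m|$ componentwise, it is itself a nonsingular $M$-matrix, and so $T_m$ is an $H$-matrix with $|T_m^{-1}|\leq\tilde T_m^{-1}$. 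Applying Theorem~\ref{thm_GJ_GGS_M} to the $M$-matrix $\tilde T_m-|E_m|$ gives $\rho(\tilde P)<1$, and hence for every $\omega\in(0,1]$ the matrix $I-\omega\tilde P$ is a nonsingular $M$-matrix with $(I-\omega\tilde P)^{-1}\geq 0$.

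The obstacle I expect to be central is that the transcription used in Theorems~\ref{thm_GJ_H} and~\ref{thm_GGS_H} does not apply here: $N=(1-\omega)T_m+\omega F_m$ contains the sign-indefinite band $T_m$, so no nonnegative $N_1$ arising from a natural GSOR splitting of $H(A)$ dominates $|N|$ componentwise, and the key estimate $|\lambda|M_1|x|\leq N_1|x|$ collapses. To get around this I would work from the factored form
\[
H_{GSOR}=(I-\omega T_m^{-1}E_m)^{-1}\bigl[(1-\omega)I+\omega T_m^{-1}F_m\bigr],
\]
which pushes the sign-indefinite $T_m$ inside $T_m^{-1}$ so that the $H$-matrix bound neutralises it before moduli are taken. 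The natural object of comparison is then the analogous factored form of the GSOR matrix of $H(A)$, namely $(I-\omega\tilde P)^{-1}[(1-\omega)I+\omega\tilde Q]$, whose spectral radius is less than $1$ by Theorem~\ref{Thm_GSOR_M}.

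With this in hand the argument closes in the style of the earlier $H$-matrix proofs. For any eigenpair $(\lambda,x)$ of $H_{GSOR}$, expanding $H_{GSOR}x=\lambda x$ and clearing the inverse yields $(\lambda-1+\omega)x=\omega T_m^{-1}F_mx+\lambda\omega T_m^{-1}E_mx$. Taking componentwise moduli and applying $|T_m^{-1}|\leq\tilde T_m^{-1}$ gives
\[
|\lambda-1+\omega|\,|x|\leq\omega\tilde Q|x|+|\lambda|\omega\tilde P|x|.
\]
Assuming for contradiction that $|\lambda|\geq 1$ and using $|\lambda|\geq 1\geq 1-\omega$, the reverse triangle inequality supplies $|\lambda-1+\omega|\geq|\lambda|+\omega-1$, and rearranging leads to
\[
|\lambda|(I-\omega\tilde P)|x|\leq\bigl[(1-\omega)I+\omega\tilde Q\bigr]|x|.
\]
Multiplication by $(I-\omega\tilde P)^{-1}\geq 0$ then yields $|\lambda|\,|x|\leq(I-\omega\tilde P)^{-1}[(1-\omega)I+\omega\tilde Q]\,|x|$, and Theorem~\ref{thm_Horn_n} forces $|\lambda|\leq\rho\bigl((I-\omega\tilde P)^{-1}[(1-\omega)I+\omega\tilde Q]\bigr)<1$, contradicting $|\lambda|\geq 1$. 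The only routine bookkeeping is verifying that $\tilde T_m$ and $\tilde T_m-|E_m|$ are $M$-matrices, which is immediate from Theorem~\ref{charM} once one observes that both are $Z$-matrices dominating $H(A)$ componentwise.
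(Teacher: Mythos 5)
Your proof is correct, but it takes a genuinely different route from the paper's. The paper does not attempt the comparison-splitting transcription you worried about; instead it mimics its SDD argument (Theorem~\ref{thm_DD_GSOR}): assuming $|\lambda|\ge 1$, it passes from $\det(\lambda M-N)=0$ to $\det\bigl[T_m-aE_m-bF_m\bigr]=0$ with $a=\lambda\omega/(\lambda-1+\omega)$, $b=\omega/(\lambda-1+\omega)$, checks $|a|,|b|\le 1$, and then, for a null vector $x\neq 0$ of that matrix, combines $|T_mx|\ge(|D|-|R_m|)|x|$ with $|T_mx|\le(|E_m|+|F_m|)|x|$ to get $H(A)|x|\le 0$; monotonicity of the $M$-matrix $H(A)$ (Theorem~\ref{charM}) then forces $x=0$, a contradiction. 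Thus the sign-indefinite $N$ never enters, and convergence of GSOR for $H(A)$ is not actually used (the paper records $\rho(M_1^{-1}N_1)<1$ but never invokes it). Your argument instead works with the factored iteration matrix, identifies $(I-\omega\tilde P)^{-1}\bigl[(1-\omega)I+\omega\tilde Q\bigr]$ as exactly the GSOR matrix of $H(A)$, and closes via Theorems~\ref{Thm_GSOR_M} and~\ref{thm_Horn_n}, in the spirit of Theorems~\ref{thm_GJ_H} and~\ref{thm_GGS_H}; the algebra (the eigenvalue relation, the reverse triangle inequality using $|\lambda|\ge 1\ge 1-\omega$, the rearrangement, multiplication by $(I-\omega\tilde P)^{-1}\ge 0$, and the deduction of $\rho(\tilde P)<1$ from Theorem~\ref{thm_GJ_GGS_M} applied to the $M$-matrix $\tilde T_m-|E_m|$) is all sound. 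The one ingredient lying outside the paper's toolkit is the comparison bound $|T_m^{-1}|\le H(T_m)^{-1}$ for the $H$-matrix $T_m$ (Ostrowski's theorem): it is true and classical, but it is neither stated in the paper nor proved in your sketch, so it must be cited or proved. The paper's determinant route buys exactly the avoidance of that lemma; your route buys a proof uniform in style with the GJ/GGS $H$-matrix theorems and makes explicit that convergence for $A$ is controlled by the GSOR iteration for its comparison matrix.
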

\begin{proof}
Let $A$ be an $H$-matrix and decompose $A$ as $ A= T_{m}-E_{m}-F_{m}$. If $D=\diag(A)$, we write $M=T_{m}-\omega E_{m}=D+R_{m}-\omega E_{m}$ and $N=(1-\omega)T_{m}+\omega F_{m}$, where $T_{m}=R_{m}+D$. Let $B$ be the comparison matrix of $A$, so that $B$ is an $M$-matrix. Note that $B=|D|-|R_{m}|-|E_m|-|F_m|$. If $M_{1}=|D|-|R_{m}|-\omega |E_{m}|$  and $N_{1}=(1-\omega)(|D|-|R_{m}|)+ \omega |F_{m}|$, then $\omega B=M_{1}-N_{1}$ is the GSOR splitting. By Theorem~\ref{thm_SOR_M}, we have that $\rho(M_{1}^{-1}N_{1})<1$.

Suppose that $\lambda \in \sigma({M^{-1}N})$ and $|\lambda| \geq 1$. Then, 
\begin{eqnarray}\label{eqn3.3}\rm
 |det (\lambda I-M^{-1}N)|=0 
&\Rightarrow& \rm det (\lambda M -N)= 0 \nonumber\\
&\Rightarrow& \rm det \left[\lambda T_m - \lambda \omega E_m -(1-\omega)T_m -\omega F_m \right]= 0 \nonumber\\
&\Rightarrow& \rm det \left[T_m - \frac{\lambda \omega}{\lambda -1+\omega}E_{m} - \frac{\omega}{\lambda -1 +\omega} F_{m} \right] = 0 
\end{eqnarray}

Set $a=\dfrac{\lambda \omega}{\lambda-1+\omega}, ~b=\dfrac{\omega}{\lambda -1 +\omega}$. Since $0 < \omega < 1$, as in Theorem~\ref{thm_DD_GSOR}, it can be verified that  $|a| \leq 1\text{  and  }|b| \leq 1$. From equation~(\ref{eqn3.3}), there exists $x (\neq 0)\in \mathbb{R}^n$ such that
 \begin{align}\label{eqn3.4}
 T_m x - a E_m x - b F_m x = 0
 &\Rightarrow |T_m x| =  |a E_m x + b F_m x| \nonumber\\
 &\Rightarrow |T_m x| \leq |a|.|E_m x| + |b|.|F_m x| \nonumber\\
 &\Rightarrow |T_m x| \leq |E_m|.|x| + |F_m|.|x| 
 \end{align}
Now,
\begin{align}\label{eqn3.5}
|T_m x| = |D x +R_m x| \geq \mid|Dx| - |R_m x| \mid \nonumber\\
\geq |D|.|x| - |R_m|.|x| 
\end{align}
From equations (\ref{eqn3.4}) and (\ref{eqn3.5}) we have that $|D|.|x| - |R_m|.|x|\leq |E_m|.|x| + |F_m|.|x|$, that is $B|x|\leq 0$, or equivalently, $B(-|x|)\geq 0$. By the monotone property of $M$-matrices, $-|x|\geq 0$, that is, $|x|=0$, which is a contradiction. Hence, $|\lambda| < 1$, that is, $\rho (M^{-1}N) < 1 $. Thus GSOR method converges for $H$-matrices.
\end{proof}

\section{Numerical Illustration} In this section numerical examples are considered to illustrate the convergence of methods discussed in this paper. Consider the equation
\begin{eqnarray}\label{eqn_lap}-\Delta u+g(x,y)u=&f(x,y),&~~~~(x,y)\in\Omega= [0,1]\times [0,1]\nonumber\\
=&0,&\text{ ~~~~on }\partial\Omega
\end{eqnarray}
On uniform mesh $p_{ij}=(ih,jh),~h=\frac{1}{n+1}, ~i,j=1,2,\ldots n+1$, if $u_{ij}=u(p_{ij}),~g_{ij}=g(p_{ij}),~f_{ij}=f(p_{ij})$, discretizing Laplacian by the central difference formula at the mesh-point $p_{ij}$, equation~(\ref{eqn_lap}) yields
\begin{equation}\label{eqn_disc}
-u_{i-1,j}-u_{i,j-1}+(4+h^2g_{ij})u_{ij}-u_{i,j+1}-u_{i+1,j}=h^{2}f_{ij}
\end{equation}

The equations in (\ref{eqn_disc}) can be written as the system of linear equations $Ax=b$, where $x$ and $b$ are vectors of length $n^2$ and of the form
\begin{eqnarray}
x=&[u_{11},\ldots,u_{1n};u_{21},\ldots,u_{2n};\ldots;u_{n1},\ldots,u_{nn}]^{T}\nonumber\\
b=& h^2[f_{11},\ldots,f_{1n};f_{21},\ldots,f_{2n};\ldots;f_{n1},\ldots,f_{nn}]^{T}
\end{eqnarray}
The co-efficient matrix $A$ is an $n^2\times n^2$ block band matrix with block-bandwith $3$ in the form
\[A=\left[\begin{array}{rrrrrr}
G_{11} & -I & 0 & \ldots & 0 & 0\\
-I & G_{22} & -I &\ldots & 0 & 0\\
\vdots &\vdots &\vdots&\ddots&\vdots\\
0 &0 & 0& \ldots & G_{n-1,n-1} & -I\\
0 &0 & 0& \ldots & -I &G_{n,n}
\end{array}\right], ~~G_{ii}=\left[\begin{array}{rrrrr}
\alpha_{i1} & -1 & 0 &  \ldots & 0\\
-1 & \alpha_{i2} & -1 &\ldots & 0\\
\vdots &\vdots &\vdots&\ddots&\vdots\\
0 &0 & 0& \ldots &\alpha_{in}
\end{array}\right]\]
where $\alpha_{ij}=4+h^2g_{ij}$. 

Numerical experiment is done by taking $g(x,y)$ as $x+y,~0, \exp(xy)$ and $-\exp(4xy)$ in (\ref{eqn_lap}) and the function $f(x,y)$ is choosen so that the exact solution of the system is $x=[1,1,\ldots,1]^{T}$. The initial approximation $x_{0}$ of the solution is chosen to be the zero-vector,  maximum number of iteration considered is $10000$, and the stoppong criteria is chosen as $\|x^{(n+1)}-x^{(n)}\|_{2}\leq 10^{-7}$. Results obtained for the different functions $g$ using different methods considered in the previous sections are listed below in terms of number of iterations and timing in seconds, for $n=20, 30,40$. For GJ, GGS and GSOR methods, we take $m=1$, and in addition to that we let $\omega=1.5$ for both SOR and GSOR methods. Note that the matrix $A$ is an M-matrix.\\

\begin{table}[htbp]
\caption{\rm Numerical result for $g(x,y)=x+y$}
\centering
\begin{tabular}{||p{1.2cm}|p{2cm}|p{2cm}|p{2cm}|p{2cm}||}
\hline
n & GJ & GGS & SOR & GSOR \\ \hline
20 & 619(4.85)& 322(2.51) &211(1.66) &105(0.84)\\ \hline
30 &1336(69.35) & 695(35.39) & 466(23.74)&240(12.55)\\ \hline
40 & 2312(576.52)& 1204(310.26) &815(201.33) &422(87.77)\\ \hline
\end{tabular}
\vspace{5mm}

\caption{\rm Numerical result for $g(x,y)=0$}
\centering
\begin{tabular}{||p{1.2cm}|p{2cm}|p{2cm}|p{2cm}|p{2cm}||}
\hline
n & GJ & GGS & SOR & GSOR \\ \hline
20 & 652(5.23) & 339(3.75) & 222(2.56) & 112(1.99)\\ \hline
30 &1405(71.43) & 731(37.20) & 491(25.02) & 253(13.86)\\ \hline
40 & 2429(556.29) & 1264(267.28) & 856(180.04) & 444(93.01)\\ \hline
\end{tabular}
\end{table}

\begin{table}[htbp]
\caption{\rm Numerical result for $g(x,y)=\exp(xy)$}
\centering
\begin{tabular}{||p{1.2cm}|p{2cm}|p{2cm}|p{2cm}|p{2cm}||}
\hline
n & GJ & GGS & SOR & GSOR \\ \hline
20 & 611(4.9) & 318(2.56) & 208(1.67) & 104(0.95)\\ \hline
30 & 1319(71.11) &687(37.08) & 460(24.85) & 237(12.80)\\ \hline
40 & 2282(498.34)& 1188(249.16) &804(169.05) & 417(94.33)\\ \hline
\end{tabular}
\end{table}
\begin{table}[htbp]
\caption{\rm Numerical result for $g(x,y)=-\exp(4xy)$}
\centering
\begin{tabular}{||p{1.2cm}|p{2cm}|p{2cm}|p{2cm}|p{2cm}||}
\hline
n & GJ & GGS & SOR & GSOR \\ \hline
20 & 824(9.54) &427(4.93) & 282(3.26) &143(1.70)\\ \hline
30 & 1736(88.39)& 899(55.33)&606(43.24) & 313(17.60)\\ \hline
40 & 2972(748.34)&1540(350.88) &1045(220.80) &543(114.47)\\ \hline
\end{tabular}
\end{table}

\newpage

\section{Conclusion} In this paper we considered generalization of Jacobi, Gauss-Seidel and SOR iterative methods for solving system of linear equations. In~\cite{Sal07}, author provided a generalization of Jacobi (GJ) and Gauss-Seidel(GGS)~methods and proved that these two methods converge for strictly diagonally dominant matrices and for $M$-matrices. In this paper, we have proved  the convergence of GJ and GGS methods for $H$-matrices. Examples have been considered to illustrate the fact that these methods may not converge for symmetric positive definite matrices and for $L$-matrices.

We have proposed GSOR method, a generalization of SOR method, by generalizing diagonal matrices to banded matrices. We have proved that GSOR method converges for strictly diagonally dominant matrices, $M$-matrices, and for $H$-matrices. Also, illustrated the fact that  GSOR method may not converge for symmetric positive definite matrices and for $L$-matrices.  Numerical experiment shows  that GSOR method is more effective than conventional SOR method, GJ and GGS method.


\end{document}